\newtheorem{tw}{Theorem}[section]
\newtheorem{prop}[tw]{Proposition}
\newtheorem{wn}[tw]{Corollary}
\theoremstyle{remark}
\newtheorem{uw}[tw]{Remark}
\theoremstyle{definition}
\newcommand{\cal}[1]{\mathcal{#1}}
\newcommand{\ro}{\varrho}
\newcommand{\ukre}[1]{\underline{#1}}
\newcommand{\map}[3]{#1\colon #2\to #3}
\newcommand{\cM}{{\cal M}}
\newcommand{\SR}[1]{\underset{\rightarrow}{[#1]}}
\newcommand{\SL}[1]{\underset{\leftarrow}{[#1]}}
\begin{document}

\numberwithin{equation}{section}

\title[
A finite presentation for the twist subgroup\ldots]
{A finite presentation for the mapping class group of a nonorientable surface with Dehn twists and 
one crosscap slide as generators}

\author{Micha\l\ Stukow}

\thanks{Suported by NCS grant 2012/05/B/ST1/02171.}

\address[]{
Institute of Mathematics, University of Gda\'nsk, Wita Stwosza 57, 80-952 Gda\'nsk, Poland }

\email{trojkat@mat.ug.edu.pl}


\keywords{Mapping class group, Nonorientable surface, Presentation} \subjclass[2000]{Primary 57N05;
Secondary 20F38, 57M99}

\begin{abstract}
Let $N_{g,s}$ denote the nonorientable surface of genus $g$ with $s$ boundary components. Recently Paris and Szepietowski
\cite{SzepParis} obtained an explicit finite presentation for the mapping class group $\cM(N_{g,s})$ of the surface
$N_{g,s}$, where $s\in\{0,1\}$ and $g+s>3$. Following this work we obtain a finite presentation for the 
mapping class group $\cM(N_{g,s})$ with generators being Dehn twists and one crosscap slide. 
\end{abstract}

\maketitle%
\section{Introduction}%
Let $N_{g,s}$ be a smooth, nonorientable, compact surface of genus $g$ with $s$ boundary components. If
$s$ is zero, then we omit it from the notation. If we do not want to emphasise the numbers $g,s$, we simply 
write $N$ for a surface $N_{g,s}$. Recall that $N_{g}$ is a connected sum of $g$ projective planes 
and $N_{g,s}$ is obtained from $N_g$ by removing $s$ open disks.

Let ${\textrm{Diff}}(N)$ be the group of all diffeomorphisms $\map{h}{N}{N}$ such that $h$ is the identity 
on each boundary component. By ${\cal{M}}(N)$ we denote the quotient group of ${\textrm{Diff}}(N)$ by
the subgroup consisting of maps isotopic to the identity, where we assume that isotopies are 
the identity on each boundary component. ${\cal{M}}(N)$ is called the \emph{mapping class group} of $N$. 

The mapping class group ${\cal{M}}(S_{g,s})$ of an orientable surface is defined analogously, but we consider only
orientation preserving maps. 
\subsection{Background}
The problem of finding (decent) presentations for various mapping class groups has a long history. 
In the orientable case Birman and Hilden \cite{Bir-Hil} obtained a presentation for the so-called 
hyperelliptic mapping class group ${\cal{M}}^{h}(S_{g})$, hence in particular for the 
group ${\cal{M}}^{h}(S_{2})={\cal{M}}(S_{2})$. For $g\geq 3$ McCool \cite{McCool-pres} proved algebraically 
that the mapping class group ${\cal{M}}(S_{g})$ admits a finite presentation. The same result was proved
geometrically by Hatcher and Thurston \cite{Thur_Hat}. Later the Hatcher-Thurston approach was simplified by 
Harer \cite{Harer2} and used by Wajnryb \cite{Wajn_pre} to obtain a simple presentation for 
the groups ${\cal{M}}(S_{g})$ and ${\cal{M}}(S_{g,1})$. The Wajnryb presentation was used by various authors and 
led to some other interesting presentations -- see \cite{MatsumotoPres,Gervais_top,ParLab}. Let us emphasise 
that for surfaces without punctures, each of the above presentations use Dehn twists as generators. 

In the nonorientable case Lickorish \cite{Lick3} first observed that Dehn twists do not generate 
the mapping class group ${\cal{M}}(N_{g})$ for $g\geq 2$. More precisely, ${\cal{M}}(N_{g})$ is generated
by Dehn twists and one \emph{crosscap slide} (or $Y$-homeomorphism).
Later Chillingworth \cite{Chil} found a finite generating set for 
${\cal{M}}(N_{g})$. This generating set was extended to the case of a surface
with punctures and/or boundary components in \cite{Kork-non, Stukow_SurBg}.

As for presentations, Birman and Chillingworth \cite{BirChil1} found a simple presentation for the 
group ${\cal{M}}(N_{3})$. Bujalance, Costa and Gamboa derived algebraically a presentation for 
the hyperelliptic mapping class group \cite{Costa-hyper}. Later this presentation was obtained 
geometrically in \cite{PresHiperNon}. Szepietowski \cite{Szep_gen4}, following the ideas of 
Benvenuti \cite{Benvenuti}, was able to find a presentation for the group ${\cal{M}}(N_{4})$, and 
recently Paris and Szepietowski \cite{SzepParis} obtained a finite presentations for groups 
${\cal{M}}(N_{g,s})$, where $s\in\{0,1\}$ and $g+s>3$.
\subsection{Main results}
The presentations obtained by Paris and Szepietowski \cite{SzepParis} as generators use Dehn twists and $g-1$ 
crosscap transpositions (Theorems \ref{ParSzep1} and \ref{ParSzep2}). The main goal of this paper is to 
simplify these presentation by replacing all crosscap transpositions with one crosscap slide
(Theorems \ref{SimParSzep1} and \ref{SimParSzep3}). Our simplification not only leads to a simpler presentation 
(with fewer generators and relations), but also gives a more natural generating set (in fact closely related to 
the generating set found by Chillingworth \cite{Chil}). In the last section we give a simple geometric 
interpretation of obtained relations.

\section{Preliminaries}
\subsection{Notation}
Let us represent a surface $N_{g,0}$ and $N_{g,1}$ as respectively a sphere or a disc with $g$ crosscaps, and 
let $\alpha_1,\ldots,\alpha_{g-1},\beta$ be two-sided circles indicated in Figure~\ref{r01}. 
\begin{figure}[h]
\begin{center}
\includegraphics[width=0.95\textwidth]{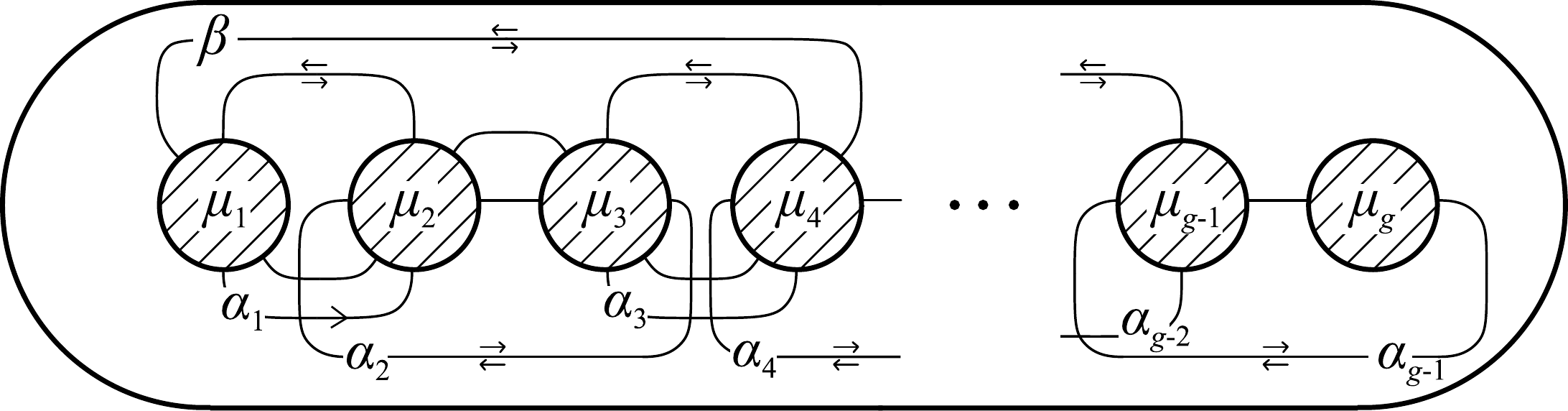}
\caption{Surface $N$ as a sphere/disc with crosscaps.}\label{r01} %
\end{center}
\end{figure}
Small arrows in this figure indicate directions of Dehn twists $a_1,\ldots,a_{g-1},b$ associated with these circles. 
Observe that $\beta$ (hence also $b$) is defined only if $g\geq 4$. From now on whenever we use $b$, we silently assume that
$g\geq 4$.

Moreover, for any two consecutive crosscaps $\mu_i,\mu_{i+1}$ we define a \emph{crosscap transposition} $u_i$ to be the map which 
interchanges these two crosscaps (see Figure~\ref{r02}).
\begin{figure}[h]
\begin{center}
\includegraphics[width=0.7\textwidth]{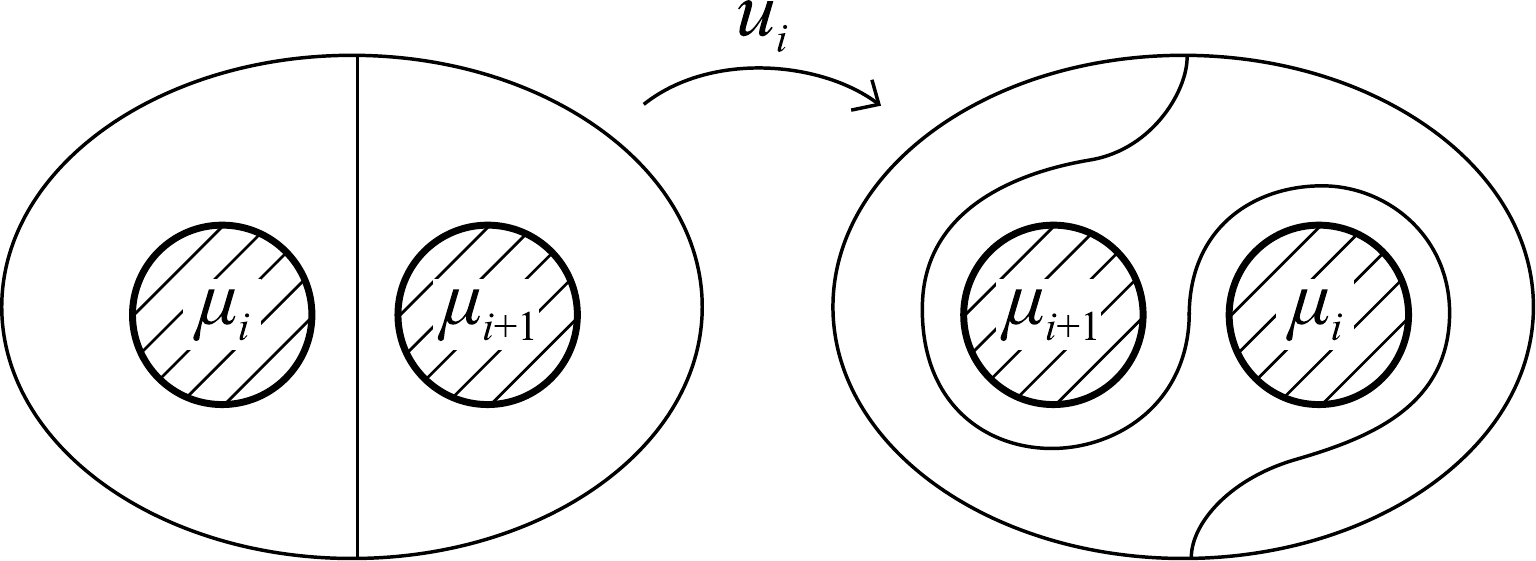}
\caption{Crosscap transposition.}\label{r02} %
\end{center}
\end{figure}
\subsection{The Paris-Szepietowski presentation for ${\cal{M}}(N)$}
The following two theorems are the main results of \cite{SzepParis}
\begin{tw}\label{ParSzep1}
If $g\geq 3$ is odd or $g=4$, then ${\cal{M}}(N_{g,1})$ admits a presentation with generators
$a_1,\ldots,a_{g-1},u_1,\ldots,u_{g-1}
$ and $b$ for $g\geq 4$. The defining relations are
\begin{enumerate}
 \item[(A1)] $a_ia_j=a_ja_i$\quad for $g\geq 4$, $|i-j|>1$,
\item[(A2)] $a_ia_{i+1}a_i=a_{i+1}a_ia_{i+1}$\quad for $i=1,\ldots,g-2$,
 \item[(A3)] $a_ib=ba_i$\quad for $g\geq 4,i\neq 4$,
 \item[(A4)] $ba_4b=a_4ba_4$\quad for $g\geq 5$,
 \item[(A5)] $(a_2a_3a_4b)^{10}=(a_1a_2a_3a_4b)^6$\quad for $g\geq 5$,
 \item[(A6)] $(a_2a_3a_4a_5a_6b)^{12}=(a_1a_2a_3a_4a_5a_6b)^{9}$\quad for $g\geq 7$,
\item[(B1)] $u_iu_j=u_ju_i$\quad for $g\geq 4$, $|i-j|>1$, 
\item[(B2)] $u_iu_{i+1}u_i=u_{i+1}u_iu_{i+1}$\quad for $i=1,\ldots,g-2$,
\item[(C1)] $a_1u_{i}=u_{i}a_1$\quad for $g\geq 4$, $i=3,\ldots,g-1$,
\item[(C2)] $a_iu_{i+1}u_i=u_{i+1}u_ia_{i+1}$\quad for $i=1,\ldots,g-2$,
\item[(C3)] $a_{i+1}u_iu_{i+1}=u_iu_{i+1}a_i$\quad for $i=1,\ldots,g-2$,
\item[(C4)] $a_1u_1a_1=u_1$,
\item[(C5)] $u_2a_1a_2u_1=a_1a_2$,
\item[(C6)] $(u_3b)^2=(a_1a_2a_3)^2(u_1u_2u_3)^2$\quad for $g\geq 4$,
\item[(C7)] $u_5b=bu_5$\quad for $g\geq 6$,
\item[(C8)] $ba_4u_4=a_4u_4(a_4a_3a_2a_1u_1u_2u_3u_4)b$\quad for $g\geq 5$.
\end{enumerate}
If $g\geq 6$ is even, then ${\cal{M}}(N_{g,1})$ admits a presentation with generators
$a_1,\ldots,a_{g-1}$, $u_1,\ldots,u_{g-1},b$ and additionally $b_0,b_1,\ldots,b_{\frac{g-2}{2}}$. The defining relations 
are relations (A1)--(A6), (B1)--(B2), (C1)--(C8) above and additionally
\begin{enumerate}
 \item[(A7)] $b_0=a_1, b_1=b$,
\item[(A8)] $b_{i+1}=(b_{i-1}a_{2i}a_{2i+1}a_{2i+2}a_{2i+3}b_{i})^5(b_{i-1}a_{2i}a_{2i+1}a_{2i+2}a_{2i+3})^{-6}$\quad\\	  for
$1\leq i\leq \frac{g-4}{2}$,
\item[(A9a)] $b_2b=bb_2$\quad for $g=6$,
\item[(A9b)] $b_{\frac{g-2}{2}}a_{g-5}=a_{g-5}b_{\frac{g-2}{2}}$\quad for $g\geq 8$. \qed
\end{enumerate}
\end{tw}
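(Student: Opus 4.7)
My plan is to prove this presentation by induction on the genus $g$. For the base cases I would use the Birman--Chillingworth presentation of $\cM(N_3)$ and Szepietowski's presentation of $\cM(N_4)$, both cited in the introduction. The inductive engine is an action of $\cM(N_{g,1})$ on a simply connected $2$-complex $\cal{X}$ built from isotopy classes of two-sided non-separating simple closed curves (or, more conveniently, a Hatcher--Thurston style cut-system complex adapted to the nonorientable setting). I would first establish simple connectivity of $\cal{X}$ by a surgery argument, then apply K.\,S.\,Brown's presentation theorem for groups acting on simply connected $2$-complexes.

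The input to Brown's method is a presentation of each vertex and edge stabilizer together with one relation per $2$-cell orbit. The stabilizer of a two-sided non-separating curve $\gamma$ is generated by the Dehn twist $t_\gamma$ and the mapping class group of $N_{g,1}\bez\gamma$, which is $N_{g-1,2}$ or $N_{g-2,3}$ depending on whether the cut surface is connected and whether it remains nonorientable; both have smaller complexity, so the inductive hypothesis applies. Edge stabilizers, corresponding to pairs of disjoint curves, reduce similarly. A sequence of Tietze transformations then converts the abstract presentation produced by Brown's method into the stated one in the generators $a_1,\ldots,a_{g-1}$, $b$, $u_1,\ldots,u_{g-1}$ (plus the $b_i$'s in the even case). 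The short-support relations (A1)--(A4), (B1)--(B2), and (C1)--(C7) translate directly, as each reflects a local geometric configuration: either disjoint supports, standard braid pictures, or chain triples living on an explicit small subsurface.

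The delicate part is the verification of the global chain-type relations (A5), (A6), (C8), and the recursive relation (A8) needed when $g$ is even. Relations (A5) and (A6) are the classical chain relations in the mapping class group of a genus $2$ or genus $3$ orientable subsurface with two boundary components embedded in $N_{g,1}$; proving them amounts to identifying the correct orientable subsurface and invoking the analogous chain relation from Wajnryb's presentation. The recursion (A8) defines $b_{i+1}$ as a long word in $b_{i-1}$, $b_i$, and the $a_j$'s, and must be matched inductively to the Dehn twist along a specific two-sided non-separating curve whose cover in the orientation double cover is a symmetric genus $2$ subsurface; this is essentially a delicate exercise in curve-chasing together with another application of a chain relation. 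Finally, (C8) is a Lickorish-type identity mixing a crosscap transposition with a chain of twists and $b$, which I would verify by exhibiting an explicit isotopy on the relevant genus $2$ subsurface with crosscaps. I expect this last step --- simultaneously checking that each of the long relations holds in $\cM(N_{g,1})$ and that they (together with the local ones) suffice to rewrite every $2$-cell relator produced by Brown's machine --- to be the main obstacle.
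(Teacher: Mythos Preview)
The paper does not prove this theorem at all: it is quoted verbatim as one of the two main results of Paris and Szepietowski \cite{SzepParis}, and the statement is closed with a bare \qed. There is therefore nothing in the present paper to compare your proposal against; the author's only ``proof'' is the citation.

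Your outline is a reasonable high-level sketch of how such a presentation theorem \emph{could} be obtained (action on a simply connected curve complex, Brown's theorem, induction on genus via stabilizers), and this is indeed in the spirit of the Hatcher--Thurston/Benvenuti/Szepietowski programme mentioned in the introduction. But be aware that you are not reproducing the argument of the paper under review; you are sketching a proof of a result that the paper imports as a black box. If the assignment was to supply the paper's own proof of this statement, the correct answer is simply that the paper gives none and defers to \cite{SzepParis}.
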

For the geometric interpretation of generators $b_0,b_1,\ldots,b_{\frac{g-2}{2}}$ and relations 
(A8), (A9) see
\cite{SzepParis}.
\begin{tw}\label{ParSzep2}
 If $g\geq 4$, then the group ${\cal{M}}(N_{g,0})$ is isomorphic to the quotient of the group ${\cal{M}}(N_{g,1})$
with presentation given in Theorem \ref{ParSzep1} by the relations
\begin{enumerate}
 \item[(B3)] $(u_{1}u_2\cdots u_{g-1})^g=1$,
\item[(B4)] $(u_{g-1}\cdots u_2u_1)(u_1u_2\cdots u_{g-1})=1$,
\item[(D)] $a_1(a_2\cdots a_{g-1}u_{g-1}\cdots u_2)a_1=a_2\cdots a_{g-1}u_{g-1}\cdots u_2$.\qed
\end{enumerate}
\end{tw}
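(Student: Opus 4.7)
The plan is to pass through the capping homomorphism $\psi\colon \cM(N_{g,1})\to \cM(N_{g,0})$ induced by gluing a disk along the boundary of $N_{g,1}$. This map is well-known to be surjective, so the task reduces to identifying $\ker\psi$ and matching it with the normal closure of (B3), (B4), (D) inside $\cM(N_{g,1})$. I would factor $\psi$ as the composite $\cM(N_{g,1})\xrightarrow{\eta}\cM(N_{g,0},*)\to\cM(N_{g,0})$, where $\cM(N_{g,0},*)$ is the mapping class group with a marked point. The map $\eta$, which collapses the boundary to a point, has kernel normally generated by the boundary Dehn twist $T_\partial$; the forgetful map fits into the Birman exact sequence for nonorientable surfaces, whose kernel is the image of the point-pushing homomorphism $\pi_1(N_{g,0})\to\cM(N_{g,0},*)$. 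Since $\pi_1(N_{g,0})=\gen{\lst{x}{g}\st x_1^2\dotsm x_g^2=1}$ with $x_i$ a loop through the $i$-th crosscap, the subgroup $\ker\psi$ is normally generated by $T_\partial$ together with the point-push classes $P_{x_1},\dotsc,P_{x_g}$.

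Next I would verify geometrically that (B3), (B4), (D) lie in $\ker\psi$. Writing $\rho=u_1u_2\dotsm u_{g-1}$, relation (B4) asserts $(u_{g-1}\dotsm u_1)\rho=1$, and after capping identifies $\rho$ with the $2\pi/g$ rotation of the disc-with-$g$-crosscaps picture; relation (B3) then says $\rho^g=1$, expressing the fact that a full rotation of the capping disk is isotopic to the identity in $\cM(N_{g,0})$. For (D), the element $X=a_2\dotsm a_{g-1}u_{g-1}\dotsm u_2$ acts on $\alpha_1$ as a point-push around an orientation-reversing loop that traverses the crosscaps $\mu_2,\dotsc,\mu_{g-1}$; the identity $Xa_1X^{-1}=a_1^{-1}$ simply records the reversal of twist direction produced by pushing across a M\"obius band.

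The final, and most delicate, step is to show that modulo the relations of Theorem~\ref{ParSzep1}, the normal closure of (B3), (B4), (D) is exactly $\ker\psi$. For the boundary twist I would use the standard chain-twist identity for nonorientable surfaces to express $T_\partial$ as a word in the $u_i$; this word collapses to $1$ precisely once (B3) and (B4) are imposed, since the relevant chain bounds the capping disk. For the point-pushes, relation (D) is (up to (A)--(C) consequences) the relation $P_{x_1}=1$; conjugating (D) by successive powers of $\rho$, which is now of finite order $g$ by (B3), yields $P_{x_i}=1$ for every $i$, and the sole defining relation $x_1^2\dotsm x_g^2=1$ of $\pi_1(N_{g,0})$ is automatically accounted for once every individual push-class is trivialised.

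The main obstacle lies in this last step: writing out $T_\partial$ and each $P_{x_i}$ explicitly as words in the generators $a_i,u_i,b$, and then reducing these words to consequences of (B3), (B4), (D) using (A1)--(C8). These computations are elementary in nature but combinatorially heavy; I would model them on the Wajnryb-style arguments used in \cite{SzepParis} for the analogous crosscap-transposition presentation, making free use of the braid-type relations (B1)--(B2) to move $\rho$ past other generators when producing the conjugates that trivialise the various $P_{x_i}$.
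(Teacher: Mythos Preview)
There is nothing to compare against: in this paper Theorem~\ref{ParSzep2} is not proved but merely quoted from \cite{SzepParis}; the terminal \qed\ after the statement signals exactly this. The author's goal here is to \emph{use} the Paris--Szepietowski presentation as input and rewrite it, not to re-derive it.

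Your outline is nonetheless a sensible reconstruction of how such results are proved, and it matches the general architecture of \cite{SzepParis}: cap the boundary, factor through the point-pushing Birman sequence, and identify the extra relations with the boundary twist and generators of the point-pushing subgroup. Two caveats worth flagging if you intend to turn this into an actual proof. First, on a nonorientable surface the forgetful kernel is $\pi_1(N_{g,0})$, but the point-push along a one-sided loop is not a Dehn twist product in the usual two-sided sense; this is precisely what produces relations of the shape $Xa_1X^{-1}=a_1^{-1}$ rather than $Xa_1X^{-1}=a_1$, and getting the correct $X$ requires tracking orientations carefully. Second, your identification of $T_\partial$ with a word that collapses under (B3)--(B4) is the genuinely nontrivial step, and in \cite{SzepParis} it is handled via an explicit chain relation for the $u_i$'s rather than by an abstract argument; the ``combinatorially heavy'' computations you defer are in fact the entire content of the theorem.
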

\section{Simplified presentations for ${\cal{M}}(N_{g,1})$ and ${\cal{M}}(N_{g,0})$}
The goal of this section is to simplify the presentations of Theorems \ref{ParSzep1} and \ref{ParSzep2}. We will achieve this
by removing generators $u_2,\ldots,u_{g-1}$ and replacing $u_1$ by $y=a_1^{-1}u_1$.

Geometrically, $y$ is a \emph{crosscap slide} (or Y-homeomorphism), that is it is the effect of pushing the crosscap $\mu_1$ along
the curve $\alpha_1$ indicated in Figure \ref{r01}. For more details about the action of $y$ see Section \ref{sec:geom} below 
or Section 2.2 of \cite{SzepParis}. Note that Paris and Szepietowski use slightly different convention and define crosscaps in the form $a_1u_1$ (that is they push $\mu_2$ along $\alpha_1$), but we do not rely on any computations involving their notation for crosscaps, hence this leads to no ambiguities. 
\begin{tw} \label{SimParSzep1}
 If $g\geq 3$ is odd or $g=4$, then ${\cal{M}}(N_{g,1})$ admits a presentation with generators $a_1,\ldots,a_{g-1},y$ and $b$
for
$g\geq 4$. The defining relations are (A1)--(A6) and 
\begin{enumerate} 
 \item[(B1')] $y(a_2a_3a_1a_2ya_2^{-1}a_1^{-1}a_3^{-1}a_2^{-1})=(a_2a_3a_1a_2ya_2^{-1}a_1^{-1}a_3^{-1}a_2^{-1})y$
\quad for $g\geq 4$,
 \item[(B2')]  $y(a_2a_1y^{-1}a_2^{-1}ya_1a_2)y=a_1(a_2a_1y^{-1}a_2^{-1}ya_1a_2)a_1$,
\item[(C1')] $a_iy=ya_i$ for $g\geq 4$, $i=3,4,\ldots,g-1$,
\item[(C2')] $a_2(ya_2y^{-1})=(ya_2y^{-1})a_2$,
\item[(C4')] $a_1y=ya_1^{-1}$,
\item[(C6')] $byby^{-1}=[a_1a_2a_3(y^{-1}a_2y)a_3^{-1}a_2^{-1}a_1^{-1}][a_2^{-1}a_3^{-1}(ya_2y^{-1})a_3a_2]$
\quad for $g\geq 4$,
\item[(C7')] $(a_4a_5a_3a_4a_2a_3a_1a_2ya_2^{-1}a_1^{-1}a_3^{-1}a_2^{-1}a_4^{-1}a_3^{-1}a_5^{-1}a_4^{-1})b=\\
 b(a_4a_5a_3a_4a_2a_3a_1a_2ya_2^{-1}a_1^{-1}a_3^{-1}a_2^{-1}a_4^{-1}a_3^{-1}a_5^{-1}a_4^{-1})$
\quad for $g\geq 6$,
\item[(C8')]
$[(ya_1^{-1}a_2^{-1}a_3^{-1}a_4^{-1})b(a_4a_3a_2a_1y^{-1})][(a_1^{-1}a_2^{-1}a_3^{-1}a_4^{-1})b^{-1}(a_4a_3a_2a_1)]=\\
\ [(a_4^{-1 } a_3^{-1}a_2^{-1})y(a_2a_3a_4)][a_3^{-1}a_2^{-1}y^{-1}a_2a_3]
[a_2^{-1}ya_2]y^{-1}$ \quad for $g\geq~5$.
\end{enumerate}
If $g\geq 6$ is even, then ${\cal{M}}(N_{g,1})$ admits a presentation with generators
$a_1,\ldots,a_{g-1}$, $y$, $b$ and additionally $b_0,b_1,\ldots,b_{\frac{g-2}{2}}$. The defining relations 
are relations (A1)--(A9), (B1')--(B2') and (C1')--(C8').
\end{tw}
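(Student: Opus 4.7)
The plan is to apply Tietze transformations to the Paris--Szepietowski presentation of Theorem~\ref{ParSzep1}, introducing the new generator $y$ and eliminating $u_1,\ldots,u_{g-1}$. Since the additional generators $b_0,\ldots,b_{(g-2)/2}$ and relations $(A7)$--$(A9)$ of the even-$g$ case do not involve any $u_i$, they are carried along unchanged and the odd and even cases can be handled uniformly.

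First, add $y$ as a new generator with defining relation $u_1=a_1 y$. Substituting this into $(C4)$ gives $a_1^2 y a_1 = a_1 y$, equivalently $a_1 y = y a_1^{-1}$, which is exactly $(C4')$. Next, relation $(C5)$ becomes
\[
u_2 \;=\; a_1 a_2\, u_1^{-1} a_2^{-1} a_1^{-1} \;=\; a_1 a_2\, y^{-1} a_1^{-1} a_2^{-1} a_1^{-1},
\]
expressing $u_2$ explicitly in the new generators. For $i\ge 2$ the relations $(C2)$, $(C3)$, together with the braid relation $(B2)$ on the $u$'s, determine $u_{i+1}$ inductively: from $(C3)$ one sees that $u_{i+1}$ conjugates $a_i$ to $u_i^{-1}a_{i+1}u_i$, and combining with $(B2)$ allows one to peel $u_{i+1}$ off as a conjugate of $u_i$ by a word in the $a_j$'s, eventually producing a closed-form word $w_{i+1}(a_1,\ldots,a_{i+1},y)$ equal to $u_{i+1}$.

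Now substitute each $u_i$ by $w_i$ throughout every relation of Theorem~\ref{ParSzep1}. By construction of the $w_i$, most substituted instances of $(B1),(B2),(C1),(C2),(C3)$ either become tautologies or follow from the smallest-index cases by repeated use of $(A1)$--$(A2)$ and the braid identities. The smallest-index instances reduce, after simplification modulo $(A1),(A2)$ and $(C4')$, to the relations $(B1'),(B2'),(C1'),(C2')$. The relation $(C4)$ has already produced $(C4')$, and $(C5)$ has been absorbed into the definition of $w_2$. The remaining $(C6),(C7),(C8)$ then produce $(C6'),(C7'),(C8')$. Tietze equivalence then follows once we verify the converse as well, i.e.\ that each primed relation is a consequence of the original presentation after $y$ is replaced by $a_1^{-1}u_1$; this is essentially the same calculation read backwards.

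The main obstacle is the verification in this last step for the long relations $(C6),(C7),(C8)$. Both the substituted subwords and the target primed relations are lengthy expressions in $a_1,\ldots,a_{g-1},b$ and $y$, so the reduction of one into the other requires a carefully ordered sequence of applications of the braid relations $(A1)$--$(A6)$ together with $(C4')$ in the form $y a_1 = a_1^{-1}y$ (and its consequence $y a_1 y = a_1^{-1}$). These computations are purely mechanical but constitute the bulk of the technical work; once they are executed, Tietze equivalence of the two presentations is established and the theorem follows.
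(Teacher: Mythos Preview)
Your overall strategy---Tietze transformations starting from Theorem~\ref{ParSzep1}, introducing $y$ via $u_1=a_1y$, then eliminating the $u_i$'s---is exactly the paper's approach. However, the mechanism you propose for the elimination step is where your argument diverges from the paper and becomes a genuine gap.

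You write that $(C2),(C3),(B2)$ allow one to ``peel off'' $u_{i+1}$ as a conjugate of $u_i$, ``eventually producing a closed-form word $w_{i+1}$''. But none of $(C2),(C3),(B2)$ individually solves for $u_{i+1}$ in terms of the other generators, and your description of how they combine is not a procedure one can actually execute (your reading of $(C3)$ is also not quite right: it says $(u_iu_{i+1})^{-1}a_{i+1}(u_iu_{i+1})=a_i$, not that $u_{i+1}$ conjugates $a_i$ to $u_i^{-1}a_{i+1}u_i$). The paper instead introduces the auxiliary element $M=a_1a_2\cdots a_{g-1}$ and uses the derived relation $Mu_i=u_{i+1}^{-1}M$ (a consequence of $(C1a)$ and $(C5a)$, proved in Lemma~3.8 of \cite{SzepParis}) to obtain the uniform formula $u_i=M^{i-1}u_1^{(-1)^{i-1}}M^{-(i-1)}$. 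This closed form, together with $(X4)$: $Ma_i=a_{i+1}M$, is precisely what makes the reduction of each family $(B1),(B2),(C1),(C2),(C3)$ to its smallest-index instance systematic: one conjugates by powers of $M$ and then strips off the outer $a_j$'s using $(A1),(A2)$ and $(C1')$. Without this device, your claim that ``most substituted instances \ldots\ follow from the smallest-index cases by repeated use of $(A1)$--$(A2)$'' is an assertion rather than an argument.

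Finally, you defer the rewriting of $(C6),(C7),(C8)$ entirely, calling it ``purely mechanical''. In the paper these are the longest computations, each carried out line by line; in particular the reduction of $(C8)$ to $(C8')$ occupies roughly a page and uses all of $(A1)$--$(A3)$, $(X1)$--$(X3)$, $(C1')$, $(C2')$, $(C4')$ in a specific order. Omitting this is omitting the proof.
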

\begin{proof}
 Let us begin by adding to the presentation of Theorem \ref{ParSzep1} two generators $y,M$ and five relations
\begin{enumerate}
 \item[(X1)] $u_1=a_1y$,
\item[(X2)] $M=a_1a_2\cdots a_{g-1}$,
\item[(X3)] $Mu_i=u_{i+1}^{-1}M$\quad for $i=1,\ldots,g-2$,
\item[(X4)] $Ma_i=a_{i+1}M$\quad for $i=1,\ldots,g-2$,
\item[(C1')] $a_iy=ya_i$ for $g\geq 4$, $i=3,4,\ldots,g-1$.
\end{enumerate}
Relations (X1) and (X2) simply define $y$ and $M$ in terms of the remaining generators. 
Relations (X3) and (C1') are easy consequences of relations 
\begin{enumerate}
 \item[(C1a)] $a_iu_j=u_ja_i$\quad for $g\geq 4$, $|i-j|>1$,
\item[(C5a)] $a_ia_{i+1}u_i=u_{i+1}^{-1}a_ia_{i+1}$\quad for $i=1,\ldots,g-2$,
\end{enumerate}
which are proved in Lemma 3.8 of \cite{SzepParis}. Relation (X4) is an easy consequence of (A1) and (A2).

Observe that relation (X3) allows to inductively remove
$u_2,u_3,\ldots,u_{g-1}$ from the presentation, and relation (X1) allows to replace $u_1$ with $y$. 
In order to complete this
task we need to rewrite relations involving $u_1,\ldots,u_{g-1}$, that is relations (B1)--(B2) and 
(C1)--(C8).\\
{\bf (C4)} It is straightforward to check that relation (C4) can be rewritten as
\begin{enumerate}
 \item[(C4')] $a_1y=ya_1^{-1}$.
\end{enumerate}
Using this relation and relations (C1'), (X1), (X2), (A1), (A2) we can rewrite (X3) for $i\leq 5$ as follows
\[\begin{aligned}
      u_2&=Mu_1^{-1}M^{-1}=(a_1\cdots a_{g-1})(a_1y^{-1})(a_{g-1}^{-1}\cdots a_1^{-1})=a_1a_2(a_1y^{-1})a_2^{-1}a_1^{-1},\\
   u_3&=Mu_2^{-1}M^{-1}=(a_1\cdots a_{g-1})a_1a_2(a_1y)a_2^{-1}a_1^{-1}(a_{g-1}^{-1}\cdots a_1^{-1})=\\
   &=a_1a_2a_3\SL{a_1a_2}(a_1y)\SR{a_2^{-1}a_1^{-1}}a_3^{-1}a_2^{-1}a_1^{-1}
=\\
&=(a_2a_3)a_1a_2\ukre{a_3}(a_1y)\ukre{a_3^{-1}}a_2^{-1}a_1^{-1}(a_3^{-1}a_2^{-1})=a_2a_3a_1a_2(a_1y)a_2^{-1}a_1^{-1}a_3^{-1}a_2^{-1},\\
   u_4&=Mu_3^{-1}M^{-1}=(a_1\cdots a_{g-1})a_2a_3a_1a_2(a_1y^{-1})a_2^{-1}a_1^{-1}a_3^{-1}a_2^{-1}(a_{g-1}^{-1}\cdots
a_1^{-1})=\\
   &=(a_1a_2a_3a_4)\SL{a_2a_3a_1a_2}(a_1y^{-1})\SR{a_2^{-1}a_1^{-1}a_3^{-1}a_2^{-1}}(a_4^{-1}a_3^{-1}a_2^{-1}a_1^{-1})=\\
   &=(a_3a_4a_2a_3)a_1a_2\ukre{a_3a_4}(a_1y^{-1})\ukre{a_4^{-1}a_3^{-1}}a_2^{-1}a_1^{-1}(a_3^{-1}a_2^{-1}a_4^{-1}a_3^{-1})=\\
   &=a_3a_4a_2a_3a_1a_2(a_1y^{-1})a_2^{-1}a_1^{-1}a_3^{-1}a_2^{-1}a_4^{-1}a_3^{-1},\\
   u_5&=Mu_4^{-1}M^{-1}=\\
   &=(a_1\cdots
a_{g-1})\SL{a_3a_4a_2a_3a_1a_2}(a_1y)\SR{a_2^{-1}a_1^{-1}a_3^{-1}a_2^{-1}a_4^{-1}a_3^{-1}}(a_{g-1}^{-1}\cdots a_1^{-1})=\\
   &=a_4a_5a_3a_4a_2a_3a_1a_2(a_1y)a_2^{-1}a_1^{-1}a_3^{-1}a_2^{-1}a_4^{-1}a_3^{-1}a_5^{-1}a_4^{-1}.
  \end{aligned}
\]
In particular relation {\bf (C5)} is superfluous. 

In the above computations we introduced the notation which should help the reader to follow our transformations. The underlined parts indicate expressions which will be reduced, and parts with small arrows indicate expressions which will be moved to the left/right.

Let us also explain that by 'rewriting a relation $R$' we mean transforming it into a relation which equivalent to $R$ under some, previously specified, set of relations. \\
{\bf (B1)} In order to rewrite relation (B1) assume that $j=i+k$, where $k\geq 2$. By (X3) we have 
$u_i=M^{(i-1)}u_1^{(-1)^{i-1}}M^{-(i-1)}$ and 
\[u_j=u_{i+k}=M^{(i+k-1)}u_1^{(-1)^{i+k-1}}M^{-(i+k-1)},\] hence (B1) takes form
\[\begin{aligned}
   &M^{(i-1)}u_1^{(-1)^{i-1}}M^{-(i-1)}\cdot
M^{(i+k-1)}u_1^{(-1)^{i+k-1}}M^{-(i+k-1)}=\\
&\quad\qquad\qquad\qquad=M^{(i+k-1)}u_1^{(-1)^{i+k-1}}M^{-(i+k-1)} \cdot
M^{(i-1)}u_1^{(-1)^{i-1}}M^{-(i-1)},\\
   &u_1^{(-1)^{i-1}}\left(M^{k}u_1^{(-1)^{i+k-1}}M^{-k}\right)=\left(M^{k}u_1^{(-1)^{i+k-1}}M^{-k}\right)u_1^{(-1)^{i-1}}.
  \end{aligned}
\]
What we get is the commutativity relation between  $u_1^{(-1)^{i-1}}$ and $M^{k}u_1^{(-1)^{i+k-1}}M^{-k}$,
hence this relation is equivalent to the commutativity relation between $u_1$ and $M^ku_1M^{-k}$.
\[u_1M^{k}u_1M^{-k}=M^{k}u_1M^{-k}u_1.\]
We claim that this relation can be reduced to the case $k=2$. If $k>2$, by (A1), (A2), (X1), (X2), (X4), (C1') 
we have
\[\begin{aligned}   
   &u_1M^{k-1}\SL{a_1a_2}\ukre{a_3\cdots a_{g-1}}u_1\ukre{a_{g-1}^{-1}\cdots a_3^{-1}}\SR{a_2^{-1}a_1^{-1}}M^{-(k-1)}=\\
   &\qquad\qquad\qquad=M^{k-1}\SL{a_1a_2}\ukre{a_3\cdots a_{g-1}}u_1\ukre{a_{g-1}^{-1}\cdots a_3^{-1}}\SR{a_2^{-1}a_1^{-1}}M^{-(k-1)}u_1,\\
   &\ukre{a_{k}a_{k+1}}u_1M^{k-1}u_1M^{-(k-1)}\ukre{a_{k+1}^{-1}a_k^{-1}}=
   \ukre{a_{k}a_{k+1}}M^{k-1}u_1M^{-(k-1)}u_1\ukre{a_{k+1}^{-1}a_k^{-1}},\\
   &u_1M^{k-1}u_1M^{-(k-1)}=M^{k-1}u_1M^{-(k-1)}u_1,\\
   &\cdots\\
   &u_1M^{2}u_1M^{-2}=M^{2}u_1M^{-2}u_1,\\
   &u_1u_3=u_3u_1.
  \end{aligned}
\]
Now we substitute for $u_1$ and $u_3$.
\[\begin{aligned}  
&a_1y(a_2a_3a_1a_2(\SL{a_1}y)a_2^{-1}a_1^{-1}a_3^{-1}a_2^{-1})=(a_2a_3a_1a_2(\SL{a_1}y)a_2^{-1}a_1^{-1}a_3^{-1}a_2^{-1})\SL{
a_1}y,\\  
&\ukre{a_3a_1}y(a_2a_3a_1a_2ya_2^{-1}a_1^{-1}a_3^{-1}a_2^{-1})=\ukre{a_3a_1}(a_2a_3a_1a_2ya_2^{-1}a_1^{-1}a_3^{-1}a_2^{-1}
)y,\\
   &y(a_2a_3a_1a_2ya_2^{-1}a_1^{-1}a_3^{-1}a_2^{-1})=(a_2a_3a_1a_2ya_2^{-1}a_1^{-1}a_3^{-1}a_2^{-1})y.
  \end{aligned}
\]
This is exactly relation (B1').\\
{\bf (B2)}
By (X3), relation (B2) takes form
\[\begin{aligned}
   &(M^{(i-1)}u_1^{(-1)^{i-1}}M^{-(i-1)})(M^{i}u_1^{(-1)^{i}}M^{-i})(M^{(i-1)}u_1^{(-1)^{i-1}}M^{-(i-1)})=\\
   &\qquad\qquad\qquad\qquad =(M^{i}u_1^{(-1)^{i}}M^{-i})(M^{(i-1)}u_1^{(-1)^{i-1}}M^{-(i-1)})(M^{i}u_1^{(-1)^{i}}M^{-i}),\\
   &u_1^{(-1)^{i-1}}Mu_1^{(-1)^{i}}M^{-1}u_1^{(-1)^{i-1}}=
   Mu_1^{(-1)^{i}}M^{-1}u_1^{(-1)^{i-1}}Mu_1^{(-1)^{i}}M^{-1}.
  \end{aligned}
\]
Observe that we can assume (possibly taking the inverse of the relation) that $u_1^{(-1)^i}=u_1^{-1}$, and the
relation takes form
\[\begin{aligned}
   &u_1Mu_1^{-1}M^{-1}u_1=Mu_1^{-1}M^{-1}u_1Mu_1^{-1}M^{-1},\\
   &u_1u_2u_1=u_2u_1u_2,\\  
&\ukre{a_1}y(a_1a_2a_1y^{-1}a_2^{-1}\ukre{a_1^{-1}})\ukre{a_1}y=(\ukre{a_1}a_2a_1y^{-1}a_2^{-1}\ukre{a_1^{-1}})\ukre{a_1}
y(a_1a_2a_1\SR{y^{-1}a_2^{-1}a_1^{-1}}),\\
   &y\SL{a_1}a_2a_1y^{-1}a_2^{-1}ya_1a_2y=a_2a_1y^{-1}a_2^{-1}ya_1a_2a_1,\\
  &y(a_2a_1y^{-1}a_2^{-1}ya_1a_2)y=a_1(a_2a_1y^{-1}a_2^{-1}ya_1a_2)a_1.
  \end{aligned}
\]
Hence (B2) is equivalent to (B2').\\
{\bf (C1)} 
Let $k=i-1\geq 2$. By (X3), relation (C1) takes form
\[a_1M^{k}u_1^{(-1)^{k}}M^{-k}=M^{k}u_1^{(-1)^{k}}M^{-k}a_1.\]
Since this is a commutativity relation, we can assume that $u_1^{(-1)^{k}}=u_1$, hence we have
\[a_1M^{k}u_1M^{-k}=M^{k}u_1M^{-k}a_1.\]
We claim that this relation can be reduced to the case $k=2$. If $k>2$ by relations 
(A1), (A2), (X1)--(X3), (C1'), we have
\[\begin{aligned}   
   &a_1M^{k-1}\SL{a_1a_2}\ukre{a_3\cdots a_{g-1}}u_1\ukre{a_{g-1}^{-1}\cdots a_3^{-1}}\SR{a_2^{-1}a_1^{-1}}M^{-(k-1)}=\\
   &\qquad\qquad\qquad=M^{k-1}\SL{a_1a_2}\ukre{a_3\cdots a_{g-1}}u_1\ukre{a_{g-1}^{-1}\cdots a_3^{-1}}\SR{a_2^{-1}a_1^{-1}}M^{-(k-1)}a_1,\\
   &\ukre{a_ka_{k+1}}a_1M^{k-1}u_1M^{-(k-1)}\ukre{a_{k+1}^{-1}a_k^{-1}}=
   \ukre{a_ka_{k+1}}M^{k-1}u_1M^{-(k-1)}a_1\ukre{a_{k+1}^{-1}a_k^{-1}},\\
   &a_1M^{k-1}u_1M^{-(k-1)}=M^{k-1}u_1M^{-(k-1)}a_1,\\
   &\ldots\\
   &a_1M^{2}u_1M^{-2}=M^{2}u_1M^{-2}a_1,\\
   &a_1u_3=u_3a_1.
\end{aligned}
\]
Now we substitute for $u_3$.
\[\begin{aligned}    
&a_1(a_2[a_3a_1]a_2(a_1y)a_2^{-1}[a_1^{-1}a_3^{-1}]a_2^{-1})=(a_2[a_3a_1]a_2(a_1y)a_2^{-1}[a_1^{-1}a_3^{-1}]a_2^{-1})a_1,\\
   &\SR{a_1}(a_2a_1a_3a_2a_1ya_2^{-1}a_3^{-1}a_1^{-1}a_2^{-1})=(a_2a_1a_3a_2a_1ya_2^{-1}a_3^{-1}a_1^{-1}a_2^{-1})\SL{a_1},\\
 &\ukre{a_2a_1a_3a_2a_1}(a_3y)\ukre{a_2^{-1}a_3^{-1}a_1^{-1}a_2^{-1}}=\ukre{a_2a_1a_3a_2a_1}(ya_3)\ukre{a_2^{-1}a_3^{-1}
a_1^{-1}a_2^{-1}},\\   
   &a_3y=ya_3.
  \end{aligned}
\]
Therefore (C1) is a special case of (C1').\\
{\bf (C2)} We rewrite (C2) using relations (A2), (X1)--(X4), (C1') and (C4'):
\[\begin{aligned}
   &a_i(M^{i}u_1^{(-1)^{i}}M^{-i})(M^{(i-1)}u_1^{(-1)^{i-1}}M^{-(i-1)})=\\
   &\qquad\qquad\qquad\qquad\qquad=(M^{i}u_1^{(-1)^{i}}M^{-i})(M^{(i-1)}u_1^{(-1)^{i-1}}M^{-(i-1)})\SL{a_{i+1}},\\
   &\SR{a_i}M^{i-1}(Mu_1^{(-1)^{i}}M^{-1})(u_1^{(-1)^{i-1}}\ukre{M^{-(i-1)}})=\\
   &\qquad\qquad\qquad\qquad\qquad\qquad\qquad=(M^iu_1^{(-1)^{i}}M^{-1})(u_1^{(-1)^{i-1}}a_2\ukre{M^{-(i-1)}}),\\
   &\ukre{M^{i-1}}a_1Mu_1^{(-1)^{i}}M^{-1}u_1^{(-1)^{i-1}}=\ukre{M^{i-1}} Mu_1^{(-1)^{i}}M^{-1}u_1^{(-1)^{i-1}}a_2,\\
   &\ukre{a_1}[a_1a_2a_1]y^{(-1)^{i}}a_2^{-1}\ukre{a_1^{-1}a_1}y^{(-1)^{i-1}}= \ukre{a_1}a_2a_1y^{(-1)^{i}}a_2^{-1}\ukre{a_1^{-1}a_1}y^{(-1)^{i-1}}a_2,\\
   &\ukre{a_2a_1}a_2y^{(-1)^{i}}a_2^{-1}y^{(-1)^{i-1}}= \ukre{a_2a_1}y^{(-1)^{i}}a_2^{-1}y^{(-1)^{i-1}}a_2,\\
   &a_2\left(y^{(-1)^{i}}a_2^{-1}y^{(-1)^{i-1}}\right)= \left(y^{(-1)^{i}}a_2^{-1}y^{(-1)^{i-1}}\right)a_2.
  \end{aligned}
\]
Obtained relation is equivalent to (C2').\\
{\bf (C3)} We rewrite (C3) using relations (A2), (X1)--(X4), (C1') and (C4'):
\[\begin{aligned}
   &\SR{a_{i+1}}(M^{(i-1)}u_1^{(-1)^{i-1}}M^{-(i-1)})(M^{i}u_1^{(-1)^{i}}M^{-i})=\\
&\qquad\qquad\qquad\qquad\qquad\qquad=(M^{(i-1)}u_1^{(-1)^{i-1}}M^{-(i-1)})(M^{i}u_1^{(-1)^{i}}M^{-i})a_i,\\
   &(\ukre{M^{(i-1)}}a_2u_1^{(-1)^{i-1}})(Mu_1^{(-1)^{i}}M^{-i})=\\
 &\qquad\qquad\qquad\qquad\qquad\qquad=(\ukre{M^{(i-1)}}u_1^{(-1)^{i-1}})(Mu_1^{(-1)^{i}}M^{-1}M^{-(i-1)})\SL{a_i},\\
   &a_2u_1^{(-1)^{i-1}}Mu_1^{(-1)^{i}}M^{-1}\ukre{M^{-(i-1)}}=u_1^{(-1)^{i-1}}Mu_1^{(-1)^{i}}M^{-1}a_1\ukre{M^{-(i-1)}},\\
   &a_2\ukre{a_1}y^{(-1)^{i-1}}\ukre{a_1}a_2\SR{a_1}y^{(-1)^{i}}a_2^{-1}a_1^{-1}=\ukre{a_1}y^{(-1)^{i-1}}\ukre{a_1}a_2\SR{a_1}y^{(-1)^{i}}a_2^{-1}\ukre{a_1^{-1}a_1},\\
   &a_2y^{(-1)^{i-1}}a_2y^{(-1)^{i}}a_2^{-1}\ukre{a_1^{-1}a_2^{-1}}=y^{(-1)^{i-1}}a_2y^{(-1)^{i}}\ukre{a_1^{-1}a_2^{-1}},\\
   &a_2y^{(-1)^{i-1}}a_2y^{(-1)^{i}}a_2^{-1}=y^{(-1)^{i-1}}a_2y^{(-1)^{i}}.
  \end{aligned} 
\]
Once again the obtained relation is equivalent to (C2').\\
{\bf (C6)} Using relations (A1), (A2), (X3), (C4'), we rewrite $u_1u_2u_3$ as
\[\begin{aligned}
   u_1u_2u_3&=\ukre{a_1}y(\ukre{a_1}a_2(a_1y^{-1})a_2^{-1}a_1^{-1})(\SL{a_2}a_3a_1a_2(a_1y)a_2^{-1}a_1^{-1}a_3^{-1}a_2^{-1})=\\
&=ya_2\ukre{a_1}y^{-1}\ukre{a_1}a_2^{-1}\ukre{a_1^{-1}}a_3\ukre{a_1}a_2a_1ya_2^{-1}a_1^{-1}a_3^{-1}a_2^{-1}=\\
&=ya_2y^{-1}a_2^{-1}a_3a_2a_1ya_2^{-1}a_1^{-1}a_3^{-1}a_2^{-1}.
  \end{aligned}
\]
Therefore, using relations (A1)--(A3), (X1)--(X3), (C1'), (C2'), (C4'), we can rewrite relation (C6) as
\[\begin{aligned}
  &(a_2a_3a_1a_2(\ukre{a_1}y)\ukre{a_2^{-1}a_1^{-1}a_3^{-1}a_2^{-1}})b(\ukre{a_2a_3a_1a_2}(\ukre{a_1}y)\ukre{a_2^{-1}a_1^{-1}a_3^{-1}a_2^{-1}})b=\\
  &\qquad\qquad\qquad =
(a_1a_2a_3)(\SL{a_1a_2}a_3)(ya_2y^{-1}a_2^{-1}a_3a_2a_1ya_2^{-1}a_1^{-1}a_3^{-1}a_2^{-1})\cdot\\
&\qquad\qquad\qquad\qquad\qquad\qquad\qquad\qquad\qquad\cdot (ya_2y^{-1}a_2^{-1}a_3a_2a_1y\ukre{a_2^{-1}a_1^{-1}a_3^{-1}a_2^{-1}}),\\
  &\ukre{a_2a_3a_1a_2}\ukre{y}byb
=\ukre{a_2a_3}(\ukre{a_1a_2}\SL{a_3})a_3(\ukre{y}a_2y^{-1}\SR{a_2^{-1}}a_3a_2\SR{a_1}ya_2^{-1}a_1^{-1}a_3^{-1}a_2^{-1})\cdot \\
&\qquad\qquad\qquad\qquad\qquad\qquad\qquad\qquad\qquad\qquad\qquad\quad\cdot(ya_2y^{-1}\SR{a_2^{-1}}a_3a_2a_1\SR{y}),\\
  &\ukre{a_3^{-1}}byby^{-1}\ukre{a_3}=
a_3a_2y^{-1}\SL{a_3}a_2\SR{a_3^{-1}}ya_2^{-1}\SR{a_1^{-1}}a_3^{-1}a_2^{-1}a_3^{-1}ya_2y^{-1}a_3a_2\SR{a_1},\\
  &byby^{-1}\SL{a_1^{-1}}=
a_2a_3[a_2y^{-1}a_2y]a_3^{-1}a_2^{-1}\SL{a_3^{-1}}a_1^{-1}[a_2^{-1}a_3^{-1}(ya_2y^{-1})a_3a_2],\\
  &\SL{a_1^{-1}}byby^{-1}=
a_2a_3y^{-1}a_2y\ukre{a_2a_2^{-1}}a_3^{-1}a_2^{-1}a_1^{-1}[a_2^{-1}a_3^{-1}(ya_2y^{-1})a_3a_2],\\
&byby^{-1}=
[a_1a_2a_3(y^{-1}a_2y)a_3^{-1}a_2^{-1}a_1^{-1}][a_2^{-1}a_3^{-1}(ya_2y^{-1})a_3a_2].
  \end{aligned}
\]
This is (C6').\\
{\bf (C7)} Using relations (A1)--(A3), (X1)--(X3), (C1'), (C4'),  we rewrite relation (C7) as
\[\begin{aligned}
   &(a_4a_5a_3a_4a_2a_3a_1a_2\SL{a_1}ya_2^{-1}a_1^{-1}a_3^{-1}a_2^{-1}a_4^{-1}a_3^{-1}a_5^{-1}a_4^{-1})b=\\
&\qquad\qquad\qquad\qquad =b(a_4a_5a_3a_4a_2a_3a_1a_2\SL{a_1}ya_2^{-1}a_1^{-1}a_3^{-1}a_2^{-1}a_4^{-1}a_3^{-1}a_5^{-1}a_4^{-1}),\\
   &\ukre{a_5}(a_4a_5a_3a_4a_2a_3a_1a_2ya_2^{-1}a_1^{-1}a_3^{-1}a_2^{-1}a_4^{-1}a_3^{-1}a_5^{-1}a_4^{-1})b=\\
&\qquad\qquad\qquad\qquad =\ukre{a_5}b(a_4a_5a_3a_4a_2a_3a_1a_2ya_2^{-1}a_1^{-1}a_3^{-1}a_2^{-1}a_4^{-1}a_3^{-1}a_5^{-1}a_4^{-1}).
  \end{aligned}
\]
This is exactly (C7').\\
{\bf (C8)} Using relations (A1)--(A3), (X1)--(X3), (C1'), (C2'), (C4'), we rewrite relation (C8) as
\[\begin{aligned}
&b\SR{a_4}(a_3a_4a_2a_3a_1a_2(a_1y^{-1})a_2^{-1}\ukre{a_1^{-1}}a_3^{-1}\ukre{a_2^{-1}}a_4^{-1}\ukre{a_3^{-1}})=\\
&\qquad\quad=\SR{a_4}(a_3a_4a_2a_3a_1a_2(a_1y^{-1})a_2^{-1}a_1^{-1}a_3^{-1}a_2^{-1}a_4^{-1}a_3^{-1})
\SL{a_4}a_3a_2a_1(\ukre{a_1}y)\cdot\\
&\qquad\qquad\qquad\qquad\quad \cdot (\ukre{a_1}a_2(\SR{a_1}y^{-1})a_2^{-1}a_1^{-1})
(a_2a_3a_1a_2(a_1y)a_2^{-1}a_1^{-1}a_3^{-1}a_2^{-1})\cdot\\
&\qquad\qquad\qquad\qquad\qquad\qquad\quad
\cdot(\SL{a_3}a_4a_2a_3a_1a_2(a_1y^{-1})a_2^{-1}\ukre{a_1^{-1}}a_3^{-1}\ukre{a_2^{-1}}a_4^{-1}\ukre{a_3^{-1}})b,\\
&b(\ukre{a_3}a_4\ukre{a_2}a_3\ukre{a_1}a_2a_1(\SR{a_1}y^{-1})a_2^{-1}a_3^{-1}a_4^{-1})=\\
&\qquad\quad=(\ukre{a_3}\SL{a_4}\ukre{a_2}\SL{a_3}\ukre{a_1}\SL{a_2a_1}(\ukre{a_1}\SL{y^{-1}})\ukre{a_1}a_2^{-1}\ukre{a_1^{-1}}a_3^
{-1}\ukre{a_2^ { -1 } } a_4^ { -1}\ukre{a_3^{-1}})\ukre{a_3a_2a_1}(y)\cdot\\
&\qquad\qquad\qquad\qquad\quad \cdot
(a_2(y^{-1})a_2^{-1}\ukre{a_1^{-1}a_2^{-1}})(\ukre{a_2}a_3\ukre{a_1}a_2(\ukre{a_1}y)\ukre{a_1}a_2^{-1}
\ukre{a_1^{-1}}a_3^{-1}\ukre{a_2^{-1}})\cdot\\
&\qquad\qquad\qquad\qquad\qquad\qquad\qquad\qquad\qquad\quad \cdot
a_4\ukre{a_2}a_3\ukre{a_1}a_2(\SR{a_1}y^{-1})\SR{a_2^{-1}a_3^{-1}a_4^{-1}b},\\  
&[(ya_1^{-1}a_2^{-1}a_3^{-1}a_4^{-1})b(a_4a_3a_2a_1y^{-1})][(a_1^{-1}a_2^{-1}a_3^{-1}a_4^{-1})b^{-1}
(a_4a_3a_2a_1)]=\\
&\qquad\qquad\qquad\qquad\quad\qquad=a_2^{-1}a_3^{-1}a_4^{-1}
ya_2y^{-1}\SL{a_2^{-1}}a_3a_2\SR{y}\SR{a_2^{-1}a_3^{-1}}a_4a_3a_2y^{-1}.
  \end{aligned}
\]
The left-hand side of the obtained relation is the same as the left-hand side of (C8'), so now let us concentrate 
on the right-hand side. Observe also that the left-hand side commutes with $a_3$ and $a_4$ -- we will use this fact below.
\[\begin{aligned}
   &a_3^{-1}a_2^{-1}a_3^{-1}\SR{a_4^{-1}}ya_2y^{-1}a_3a_2\SL{a_4}a_3ya_2y^{-1}\SR{a_3^{-1}a_4^{-1}},\\
&\SR{a_3^{-1}}a_4^{-1}a_3^{-1}a_2^{-1}a_3^{-1}\SL{y}a_2\SR{y^{-1}}\SR{a_4^{-1}}a_3a_4a_2a_3ya_2y^{-1},\\
&(a_4^{-1}a_3^{-1}a_2^{-1})y\SR{a_4^{-1}a_3^{-1}}a_2a_3a_4y^{-1}a_2a_3[a_2^{-1}ya_2]y^{-1},\\
&[(a_4^{-1}a_3^{-1}a_2^{-1})y(a_2a_3a_4)][a_3^{-1}a_2^{-1}y^{-1}a_2a_3][a_2^{-1}ya_2]y^{-1}.
  \end{aligned}
\]
This is exactly the right-hand side of (C8').

In order to finish the proof we remove generators $u_1,u_2,\ldots,u_{g-1},M$ from the presentation together with their
defining relations: (X1), (X3) and (X2).
\end{proof}
\begin{wn}
The presentation given in Theorem \ref{ParSzep1} is equivalent to the presentation in which 
we remove generators $u_6,u_7,\ldots,u_{g-1}$ and replace relations (B1), (B2), (C1)--(C5) with relations
\begin{itemize}
 \item[(B1)] $u_1u_3=u_3u_1$\quad for $g\geq 4$,
\item[(B2)] $u_1u_2u_1=u_2u_1u_2$,
\item[(C1')] $u_1a_i=a_iu_1$\quad for $g\geq 4$, $i=3,\ldots,g-1$,
\item[(C2)] $a_1u_2u_1=u_2u_1a_2$,
\item[(C4)] $a_1u_1a_1=u_1$,
\item[(C5a)] $u_{i+1}a_ia_{i+1}=a_ia_{i+1}u_i^{-1}$\quad for $i=1,2,3,4$.
\end{itemize}
\end{wn}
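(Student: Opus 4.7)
The corollary amounts to extracting an intermediate step from the proof of Theorem~\ref{SimParSzep1}. The key mechanism in that proof was the conjugation identity $Mu_iM^{-1}=u_{i+1}^{-1}$ with $M=a_1a_2\cdots a_{g-1}$, which lets us reduce large families of relations indexed by $i$ to single base cases. The corollary uses exactly this mechanism but stops the elimination earlier, keeping $u_1,\ldots,u_5$ as generators rather than reducing all the way to $y$.

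First I would invoke Lemma~3.8 of \cite{SzepParis} to derive from the original Paris–Szepietowski presentation both the family (C5a) $u_{i+1}a_ia_{i+1}=a_ia_{i+1}u_i^{-1}$ for $i=1,\ldots,g-2$ and the family (C1a) $a_iu_j=u_ja_i$ for $|i-j|>1$. Relations (C5a) for $i\geq 5$ then serve as recursive definitions $u_{i+1}=a_ia_{i+1}u_i^{-1}a_{i+1}^{-1}a_i^{-1}$, which allow us to remove the generators $u_6,\ldots,u_{g-1}$ from the presentation (and in particular render the original (C5a) for $i\geq 5$ vacuous after substitution). This accounts for retaining (C5a) only for $i\in\{1,2,3,4\}$ in the simplified list.

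Next I would verify the forward direction: the listed replacement relations (together with (A1)--(A2) and (C4)) imply the original families (B1), (B2), (C1), (C2), (C3), (C5). For this I would copy the conjugation computations from the proof of Theorem~\ref{SimParSzep1}: conjugating the base case $u_1u_3=u_3u_1$ by powers of $M$ and simplifying via (A1)--(A2) yields all cases of the original (B1); similarly (B2) and (C2)/(C3) reduce to $u_1u_2u_1=u_2u_1u_2$ and $a_1u_2u_1=u_2u_1a_2$ respectively; the original (C1) $a_1u_i=u_ia_1$ becomes, after using (X3) and simplifying, equivalent to the commutativity (C1') $u_1a_i=a_iu_1$. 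Finally, (C5) $u_2a_1a_2u_1=a_1a_2$ should be derivable from (C4) combined with (C5a) at $i=1$ and the new (C2). The reverse direction is immediate since each listed relation is a special instance of the corresponding original family or is given by Lemma~3.8.

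\textbf{Main obstacle.} The delicate point is that $M$-conjugation is no longer a ``definitional move'' (as it was in Theorem~\ref{SimParSzep1}, where $M$ was a new generator with defining relations (X2)--(X4)); here $M$ is only a shorthand, and its action on $u_i$ must be justified directly from (C5a) for $i=1,\ldots,4$ plus the inductive definitions of $u_6,\ldots,u_{g-1}$. One must therefore verify that the conjugation reductions of (B1), (B2), (C1), (C2), (C3) use (C5a) only in the indices still available as relations, and that the derivations of (C5) and of the general (C1a) from (C1') genuinely go through without circular dependence on the relations we are trying to eliminate.
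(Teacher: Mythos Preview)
Your plan is correct in outline, but it takes a longer route than the paper does. You propose to prove the equivalence \emph{directly}: start from Theorem~\ref{ParSzep1}, eliminate $u_6,\ldots,u_{g-1}$ via (C5a), and then rerun the $M$-conjugation reductions of (B1), (B2), (C1)--(C5) to their base cases. This works, but it forces you to redo a large part of the computation already carried out in the proof of Theorem~\ref{SimParSzep1}, and---as you correctly flag in your ``main obstacle''---you must then separately verify that every step of those reductions invokes (C5a) only for $i\le 4$ and (C1a) only in the form (C1').

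The paper instead proves the corollary in one line by going \emph{through} Theorem~\ref{SimParSzep1} rather than back to Theorem~\ref{ParSzep1}. Namely: take the presentation stated in the corollary, use (C5a) for $i=1,2,3,4$ to eliminate $u_2,u_3,u_4,u_5$, and substitute $u_1=a_1y$. The result is precisely the presentation of Theorem~\ref{SimParSzep1} (the rewriting of each relation into its primed form was already done there, and inspection of that proof shows it uses $u_i$ only for $i\le 5$). Since Theorem~\ref{SimParSzep1} has already been shown equivalent to Theorem~\ref{ParSzep1}, the corollary follows immediately. This bypasses your obstacle entirely: no fresh verification of the conjugation reductions is needed, because they have already been carried out and recorded.
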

\begin{proof}
 If we remove from the described presentation generators $u_2,u_3,u_4,u_5$ (using relation (C5a)) and 
replace $u_1$ with $y=a_1^{-1}u_1$, we obtain the presentation given in Theorem \ref{SimParSzep1}.
\end{proof}
Now we turn to the case of a closed surface.
\begin{prop} \label{SimParSzep2}
 If $g\geq 4$, then the group ${\cal M}(N_{g,0})$ is isomorphic to the quotient of the group ${\cal M}(N_{g,1})$ with
presentation given in Theorem \ref{SimParSzep1} obtained by adding a generator $\ro$ and relations
\begin{itemize}
 \item[(B3')] $(a_1a_2\cdots a_{g-1})^g=
 \begin{cases}
1&\text{for $g$ even}\\
\ro&\text{for $g$ odd,}
\end{cases}$
\item[(D')] $\ro a_1=a_1\ro$,
\item[(E)] $\ro^2=1$,
\item[(Fa)] $(y^{-1}a_2a_3\cdots a_{g-1}ya_2a_3\cdots a_{g-1})^{\frac{g-1}{2}}=1$\quad for $g$ odd,
\item[(Fb)] $(y^{-1}a_2a_3\cdots a_{g-1}ya_2a_3\cdots a_{g-1})^{\frac{g-2}{2}}y^{-1}a_2a_3\cdots a_{g-1}=\ro$\quad for $g$ even.
\end{itemize}
\end{prop}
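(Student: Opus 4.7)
The plan is to exhibit the presentation of the proposition as a Tietze transform of the presentation of Theorem~\ref{ParSzep2}, reusing the substitutions $u_1 = a_1y$, $M = a_1\cdots a_{g-1}$, $u_{i+1} = Mu_i^{-1}M^{-1}$ from the proof of Theorem~\ref{SimParSzep1}. The central computational observation is that (C4') yields the two identities
\[
u_1 M = y\cdot a_2\cdots a_{g-1},\qquad u_1^{-1}M = y^{-1}\cdot a_2\cdots a_{g-1}.
\]
Writing $N := a_2\cdots a_{g-1}$, these collapse the alternating $u$--$M$ products appearing in (B3), (B4), (D) into clean words in $y^{\pm 1}$ and $N$.

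First I would rewrite (B4). Using the closed form $u_i = M^{i-1}u_1^{(-1)^{i-1}}M^{-(i-1)}$ and the conjugation rule $M^k u_i^\epsilon M^{-k} = u_{i+k}^{(-1)^k\epsilon}$ coming from (X3), a direct induction gives
\[
(y^{-1}NyN)^k = u_1^{-1}u_2^{-1}\cdots u_{2k}^{-1}\cdot M^{2k}.
\]
For $g$ odd, setting $k = (g-1)/2$ turns (Fa) into the equivalent identity $u_{g-1}\cdots u_1 = M^{g-1}$; for $g$ even, taking $k = (g-2)/2$ and right-multiplying by $y^{-1}N = u_1^{-1}M$ converts (Fb) into $u_{g-1}\cdots u_1 = \ro^{-1}M^{g-1}$. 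Together with (B3') and (E), both parities reduce to the uniform identity $u_{g-1}\cdots u_1 = \ro M^{-1}$. An analogous telescoping for the forward product yields $u_1\cdots u_{g-1} = M\ro^{-1}$, so (B4), which says $u_{g-1}\cdots u_1 = (u_1\cdots u_{g-1})^{-1}$, holds.

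Next I would deal with (B3) and (D). Because $Ma_iM^{-1} = a_{i+1}$ for $1\leq i\leq g-2$ by (X4), an induction starting from (D') shows that $\ro$ commutes with every $a_i$ and hence with $M$; consequently $(M\ro^{-1})^g = M^g\ro^{-g}$, which collapses to $1$ in both parities via (B3') and (E), recovering (B3). Relation (D) asserts $a_1Ta_1 = T$ for $T = N\cdot u_{g-1}\cdots u_2$. Substituting $u_{g-1}\cdots u_2 = (u_{g-1}\cdots u_1)u_1^{-1} = \ro M^{-1}\cdot y^{-1}a_1^{-1}$, writing $N = a_1^{-1}M$, and using the consequence $a_1y^{-1}a_1 = y^{-1}$ of (C4'), the relation simplifies to the assertion that $M\ro M^{-1}$ commutes with $a_1$. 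For $g$ odd we have $M\ro M^{-1} = M\cdot M^g\cdot M^{-1} = \ro$, so (D) collapses to (D'); for $g$ even the same conclusion follows after expanding $\ro$ via (Fb) and using that $\ro$ commutes with $M$. This completes the translation of (B3), (B4), (D) into the new relations, and the reverse Tietze transformations are obtained by reversing the computations above.

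The main technical obstacle is the parity-dependent bookkeeping in the two telescoping products $u_1\cdots u_{g-1}$ and $u_{g-1}\cdots u_1$: for odd $g$ the alternating sequence of signs $+,-,+,-,\ldots$ on the $u_1$-factors closes up cleanly, whereas for even $g$ a dangling factor $y^{-1}N$ remains at the end, and it is precisely this leftover term that forces the introduction of $\ro$ on the right-hand side of (Fb) while making (Fa) an equation in the identity. Once this parity split is carefully tracked and the conjugation identities around $y$, $a_1$, and $M$ (all derivable from (C4'), (X3), (X4)) are used consistently, the remaining computations are routine.
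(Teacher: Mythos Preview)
Your overall telescoping strategy is the right one and mirrors the paper's approach, but there is a genuine gap in the step where you claim that ``an induction starting from (D$'$) shows that $\ro$ commutes with every $a_i$ and hence with $M$.'' Conjugating the relation $[\ro,a_1]=1$ by $M$ yields $[M\ro M^{-1},a_2]=1$, not $[\ro,a_2]=1$; the induction only goes through \emph{after} you already know $M\ro M^{-1}=\ro$, which is precisely what you are trying to prove. For $g$ odd this is harmless, since (B3$'$) gives $\ro=M^g$ and hence $[\ro,M]=1$ immediately, so your treatment of (D) in the odd case is fine. For $g$ even, however, (B3$'$) only says $M^g=1$, and your telescoping of (Fb) actually produces $u_{g-1}\cdots u_1=M^{-1}\ro$, not $\ro M^{-1}$; you have silently commuted $\ro$ past $M$ at this point, and again in your derivation of (B3) and in the even case of (D).

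The paper handles this by working from the other direction: it introduces $\ro$ via the defining relation (F) $\ro=Mu_{g-1}\cdots u_1$, and then invokes Lemmas~3.9 and~3.10 of \cite{SzepParis} to obtain $\ro a_i\ro^{-1}=a_i$ and $\ro u_i\ro^{-1}=u_i^{-1}$ as consequences of (A1), (A2), (B1), (B2), (C1)--(C5), (D$'$), (F). These lemmas are not formal: they use the full braid-type structure of the $u_i$ and $a_i$ relations, not merely (D$'$) and (X4). Once these commutation relations are in hand, the paper shows (B4) is redundant, rewrites (B3) as (B3$'$), and rewrites (F) as (Fa)/(Fb), which is essentially your telescoping computation run backwards. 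So the missing ingredient in your argument is a legitimate proof that $\ro$ is central among the $a_i$ (equivalently that $[\ro,M]=1$) in the even case; supplying this requires appealing to the cited lemmas or reproducing their content.
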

\begin{proof}
 We start from the presentation given by Theorem \ref{ParSzep2} and as in the case of a surface with 
boundary, we eliminate generators $u_1,\ldots,u_{g-1}$ using relations (X1)--(X4). 
However, before we do that, we add a generator $\ro$ with the defining relation
\begin{itemize}
 \item[(F)] $\ro=a_1a_2\cdots a_{g-1}u_{g-1}\cdots u_2u_1$.
\end{itemize}
The element $\ro$ represents the \emph{hyperelliptic involution}, that is the reflection across the 
plane containing centers of crosscaps in Figure \ref{r01} -- for details see \cite{PresHiperNon}.

We also add the relation (E) which is a consequence of relations (A1), (A2), (B1)--(B3), (C1)--(C5),
(F) -- for details see Lemma 3.9 of \cite{SzepParis} ($\ro$ is denoted as $r_g$ in that lemma).

By relations (C4) and (F), relation {\bf (D)} is equivalent to (D'). Using relations (A1), (A2), (B1), (B2), (C1)--(C5),
(D'), (F) one can
prove that 
\[\begin{aligned}
   &\ro a_i \ro^{-1}=a_i\quad\text{for $i=1,\ldots,g-1$,}\\
&\ro u_i \ro^{-1}=u_i^{-1}\quad\text{for $i=1,\ldots,g-1$.}
  \end{aligned}
\]
For details see Lemmas 3.9 and 3.10 of \cite{SzepParis}. Using 
these relations let us conjugate relation (F) by $\ro$.
\[\ro=\ro(a_1a_2\cdots a_{g-1}u_{g-1}\cdots u_2u_1)\ro^{-1}=
a_1a_2\cdots a_{g-1}u_{g-1}^{-1}\cdots u_2^{-1}u_1^{-1}.\]
Using this last relation and (F) we can prove that relation {\bf (B4)} is superfluous.
\[\begin{aligned}
&(u_1u_2\cdots u_{g-1})(u_{g-1}\cdots u_2u_1)=1,\\
   &(u_1u_2\cdots u_{g-1})(a_{g-1}^{-1}\cdots a_2^{-1}a_1^{-1})(a_1a_2\cdots a_{g-1})(u_{g-1}\cdots u_2u_1)=1,\\
&\ro^{-1}\ro =1.
  \end{aligned}
\]
{\bf (B3)} Now we rewrite (B3).
\[\begin{aligned}
   &(\ro^{-1} a_1a_2\cdots a_{g-1})^g=1,\\
&\ro^{-g}(a_1a_2\cdots a_{g-1})^g=1.
  \end{aligned}
\]
Since $\ro^2=1$, this relation is equivalent to (B3').\\
{\bf (F)} Finally, we need to substitute for $u_1,\ldots,u_{g-1}$ in relation (F).
\[\begin{aligned}
\ro&=a_1a_2\cdots a_{g-1}\cdot 
(M^{g-2}u_1^{(-1)^{g-2}}M^{-(g-2)})(M^{g-3}u_1^{(-1)^{g-3}}M^{-(g-3)})\cdots\\
&\qquad\qquad\qquad\qquad\qquad\qquad\cdots (M^3u_1^{-1}M^{-3})(M^2u_1M^{-2})(Mu_1^{-1}M^{-1})u_1,\\
\ro&=M^{g}M^{-1}u_1^{(-1)^{g-2}}M^{-1}u_1^{(-1)^{g-3}}M^{-1}\cdots M^{-1}u_1^{-1}M^{-1}u_1M^{-1}u_1^{-1}M^{-1}u_1.
  \end{aligned}
\] 
If $g$ is odd, by (B3') this gives
\[\begin{aligned}
   &\ro=\ro (M^{-1}u_1^{-1}M^{-1}u_1)^{\frac{g-1}{2}}\\
&(y^{-1}a_2a_3\cdots a_{g-1}ya_2a_3\cdots a_{g-1})^{\frac{g-1}{2}}=1.
  \end{aligned}
\]
If $g$ is even, we get
\[\begin{aligned}
   &\ro=M^{-1}u_1(M^{-1}u_1^{-1}M^{-1}u_1)^{\frac{g-2}{2}}\\
  &(y^{-1}a_2a_3\cdots a_{g-1}ya_2a_3\cdots a_{g-1})^{\frac{g-2}{2}}y^{-1}a_2a_3\cdots a_{g-1}=\ro.
  \end{aligned}
\]
\end{proof}
\begin{wn}
 Relation (B4) in the presentation given by Theorem \ref{ParSzep2} is superfluous.
\end{wn}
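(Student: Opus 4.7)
The plan is to note that the entire argument is already contained within the proof of Proposition \ref{SimParSzep2}, but to state it cleanly on its own. The key tool is the hyperelliptic involution
\[\ro=a_1a_2\cdots a_{g-1}u_{g-1}\cdots u_2u_1\]
together with the conjugation identities
\[\ro a_i \ro^{-1}=a_i,\qquad \ro u_i \ro^{-1}=u_i^{-1}\qquad (i=1,\dotsc,g-1)\]
and the involution identity $\ro^2=1$, all of which are established by Paris and Szepietowski (Lemmas 3.9 and 3.10 of \cite{SzepParis}) using only relations (A1), (A2), (B1)--(B3), (C1)--(C5). Crucially, (B4) is \emph{not} invoked in the proofs of those lemmas, so we may use them freely.

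First I would introduce $\ro$ as an abbreviation (no new generator is needed, since $\ro$ is a word in the $a_i$ and $u_i$) and record the three identities above. The next step is to conjugate the defining expression for $\ro$ by $\ro$ itself: applying the two conjugation rules termwise gives a second expression
\[\ro=a_1a_2\cdots a_{g-1}u_{g-1}^{-1}\cdots u_2^{-1}u_1^{-1},\]
or equivalently $u_1u_2\cdots u_{g-1}=(a_1a_2\cdots a_{g-1})^{-1}\ro^{-1}\cdot(a_1a_2\cdots a_{g-1})(a_1a_2\cdots a_{g-1})^{-1}$; more directly, rearranging yields
\[u_1u_2\cdots u_{g-1}=(a_{g-1}^{-1}\cdots a_2^{-1}a_1^{-1})\ro^{-1}\cdot(a_1a_2\cdots a_{g-1})\cdot(a_{g-1}^{-1}\cdots a_1^{-1}),\]
which (after cancelling) gives $u_1u_2\cdots u_{g-1}\cdot (a_{g-1}^{-1}\cdots a_1^{-1})=\ro^{-1}$, while the original expression for $\ro$ gives $(a_{g-1}^{-1}\cdots a_1^{-1})(a_1a_2\cdots a_{g-1})(u_{g-1}\cdots u_2u_1)=\ro$, i.e.\ $u_{g-1}\cdots u_2u_1=(a_{g-1}^{-1}\cdots a_1^{-1})\ro$.

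Finally, multiplying these two pieces together produces the telescoping cancellation displayed in the proof of Proposition \ref{SimParSzep2}:
\[(u_1u_2\cdots u_{g-1})(u_{g-1}\cdots u_2u_1)=(u_1u_2\cdots u_{g-1})(a_{g-1}^{-1}\cdots a_1^{-1})(a_1a_2\cdots a_{g-1})(u_{g-1}\cdots u_2u_1)=\ro^{-1}\ro=1,\]
which is precisely relation (B4). The only potential obstacle is to ensure that the cited lemmas of \cite{SzepParis} really do avoid (B4) in their proofs; this is the case by inspection, so the argument is circularity-free and (B4) is indeed a consequence of the remaining relations.
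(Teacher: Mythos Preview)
Your proof is correct and follows exactly the paper's approach: introduce $\ro$, invoke the conjugation identities from Lemmas~3.9 and~3.10 of \cite{SzepParis} to obtain the second expression $\ro=a_1\cdots a_{g-1}u_{g-1}^{-1}\cdots u_1^{-1}$, and then reduce (B4) to $\ro^{-1}\ro=1$. One small correction: the paper records that the conjugation identities $\ro a_i\ro^{-1}=a_i$ and $\ro u_i\ro^{-1}=u_i^{-1}$ also require relation (D) (equivalently (D')), not only (A1), (A2), (B1)--(B3), (C1)--(C5); since (D) is among the defining relations of Theorem~\ref{ParSzep2} and is distinct from (B4), this does not affect your circularity check or your conclusion.
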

\begin{proof}
 As we saw in the proof of Proposition \ref{SimParSzep2}, relation (B4) is a consequence of relations 
\[\begin{aligned}
   &\ro a_i \ro^{-1}=a_i\quad\text{for $i=1,\ldots,g-1$,}\\
&\ro u_i \ro^{-1}=u_i^{-1}\quad\text{for $i=1,\ldots,g-1$},
  \end{aligned}
\]
and these relations are consequences of relations (A1), (A2), (B1), (B2), (C1)--(C5), (D) and (F).
\end{proof}
\begin{tw}\label{SimParSzep3}
If $g\geq 4$, then the group ${\cal M}(N_{g,0})$ is isomorphic to the quotient of the group ${\cal M}(N_{g,1})$ with
presentation given in Theorem \ref{SimParSzep1} obtained by adding a generator $\ro$ and relations
\begin{itemize}
 \item[(B3')] $(a_1a_2\cdots a_{g-1})^g=
 \begin{cases}
1&\text{for $g$ even}\\
\ro&\text{for $g$ odd,}
\end{cases}$
\item[(D1)] $\ro a_i=a_i\ro$\quad for $i=1,\ldots,g-1$,
\item[(D2)] $y\ro=\ro y^{-1}$,
\item[(E)] $\ro^2=1$,
\item[(F')] $(y\ro a_2a_3\cdots a_{g-1})^{g-1}=1$.
\end{itemize}
\end{tw}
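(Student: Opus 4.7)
The strategy is to prove that the presentation in Theorem~\ref{SimParSzep3} is equivalent to the one in Proposition~\ref{SimParSzep2}. Both presentations involve the same generators (those of Theorem~\ref{SimParSzep1} together with $\rho$) and share relations (B3') and (E), so it suffices to show that, modulo these shared relations, the relations (D'), (Fa) or (Fb) on one side and (D1), (D2), (F') on the other generate the same normal subgroup.

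For the implication Proposition~\ref{SimParSzep2}~$\Rightarrow$~Theorem~\ref{SimParSzep3}, I would use the identities $\rho a_i\rho^{-1}=a_i$ and $\rho u_i\rho^{-1}=u_i^{-1}$ for $1\le i\le g-1$, which were recalled in the proof of Proposition~\ref{SimParSzep2} and established in Lemmas~3.9 and~3.10 of \cite{SzepParis}. The first gives (D1) directly. For (D2), combining $\rho u_1\rho^{-1}=u_1^{-1}$ with $u_1=a_1y$ and $\rho a_1=a_1\rho$ yields $\rho y\rho^{-1}=a_1^{-1}y^{-1}a_1^{-1}$, which collapses to $y^{-1}$ by (C4'). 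Setting $A=a_2a_3\cdots a_{g-1}$, relations (D1), (D2), (E) give
\[
(y\rho A)^2 \;=\; yA\rho\cdot yA\rho \;=\; yA\cdot y^{-1}\rho A\rho \;=\; yAy^{-1}A.
\]
For $g$ odd this makes (F') equivalent to $(yAy^{-1}A)^{(g-1)/2}=1$, which is the cyclic conjugate of (Fa) by $y^{-1}A$ and hence equivalent to it. For $g$ even, writing $y\rho A=\rho y^{-1}A$ turns (F') into $(y^{-1}A)(yAy^{-1}A)^{(g-2)/2}=\rho$, which is (Fb) after conjugation by $y^{-1}A$.

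For the reverse implication, (D') is the case $i=1$ of (D1), and because every step of the manipulations above is an equality relying only on (D1), (D2), (E), one recovers (Fa) or (Fb) from (F') by reading these identities in reverse. I expect no serious obstacle; the one point requiring care is the cyclic-conjugation step — explicitly, the identity $y^{-1}AyA=(y^{-1}A)(yAy^{-1}A)(y^{-1}A)^{-1}$ — which shows that $(y^{-1}AyA)^k=1\iff(yAy^{-1}A)^k=1$ and that conjugating (Fb) by $y^{-1}A$ produces the form needed for (F'). Beyond the single identity $(y\rho A)^2=yAy^{-1}A$, no new computations enter the proof.
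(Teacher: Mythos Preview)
Your proof is correct and follows essentially the same route as the paper's: replace (D') by the stronger pair (D1)$+$(D2) (justified via $\ro u_1\ro^{-1}=u_1^{-1}$ and (C4')), then use the key identity $(y\ro A)^2=yAy^{-1}A$ with $A=a_2\cdots a_{g-1}$ to pass between (Fa)/(Fb) and (F'). One small simplification: in the even case no conjugation is actually needed, since $(y^{-1}A)(yAy^{-1}A)^{(g-2)/2}$ and $(y^{-1}AyA)^{(g-2)/2}(y^{-1}A)$ are literally the same word after regrouping parentheses, so your derived relation \emph{is} (Fb).
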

\begin{proof}
We start by replacing relation (D') in the presentation given by Proposition \ref{SimParSzep2} by relations (D1) and (D2). 
Using these relations and relation (E) we rewrite (Fa) as
\[\begin{aligned}
   &(y\ro a_2a_3\cdots a_{g-1} y\ro a_2a_3\cdots a_{g-1})^{\frac{g-1}{2}}=1,\\
&(y\ro a_2a_3\cdots a_{g-1})^{g-1}=1.
  \end{aligned}
\]
Similarly, we rewrite (Fb) as
\[\begin{aligned}
   &(y \ro a_2a_3\cdots a_{g-1}y\ro a_2a_3\cdots a_{g-1})^{\frac{g-2}{2}}y\ro a_2a_3\cdots a_{g-1}=1, \\
&(y\ro a_2a_3\cdots a_{g-1})^{g-1}=1.
  \end{aligned}
\]
\end{proof}
\begin{uw}
 Observe that relations (B3') and (Fb),(F) allow to remove $\ro$ from the generating set, hence the presentations 
 of Proposition \ref{SimParSzep2} and Theorem~\ref{SimParSzep3} really uses Dehn twists and a crosscap slide as generators.
\end{uw}
\section{Geometric interpretation} \label{sec:geom}
The relations given by Theorem \ref{SimParSzep1} may look to be rather complicated,
so we devote this section to their geometric interpretation.

As we mentioned before, $y$ is a crosscap slide, that is it is the effect of pushing the crosscap $\mu_1$ along the curve $\alpha_1$ indicated in Figure \ref{r04}. 
\begin{figure}[h]
\begin{center}
\includegraphics[width=0.9\textwidth]{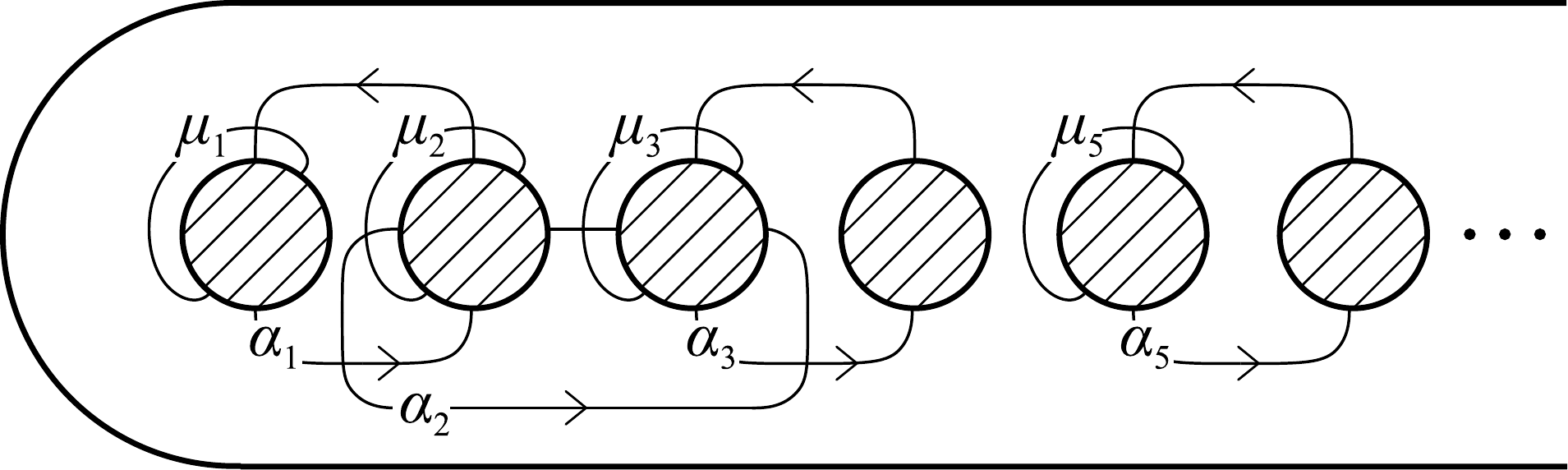}
\caption{Circles $\mu_i$ and $\alpha_i$.}\label{r04} %
\end{center}
\end{figure}
In general, crosscap slide $Y_{\mu,\alpha}$ is determined by an unoriented one-sided circle $\mu$ and an oriented
two-sided circle $\alpha$ which intersects $\mu$ in one point (see Figure \ref{r03} below and Section 2.2 of \cite{SzepParis}). 
\begin{figure}[h]
\begin{center}
\includegraphics[width=0.7\textwidth]{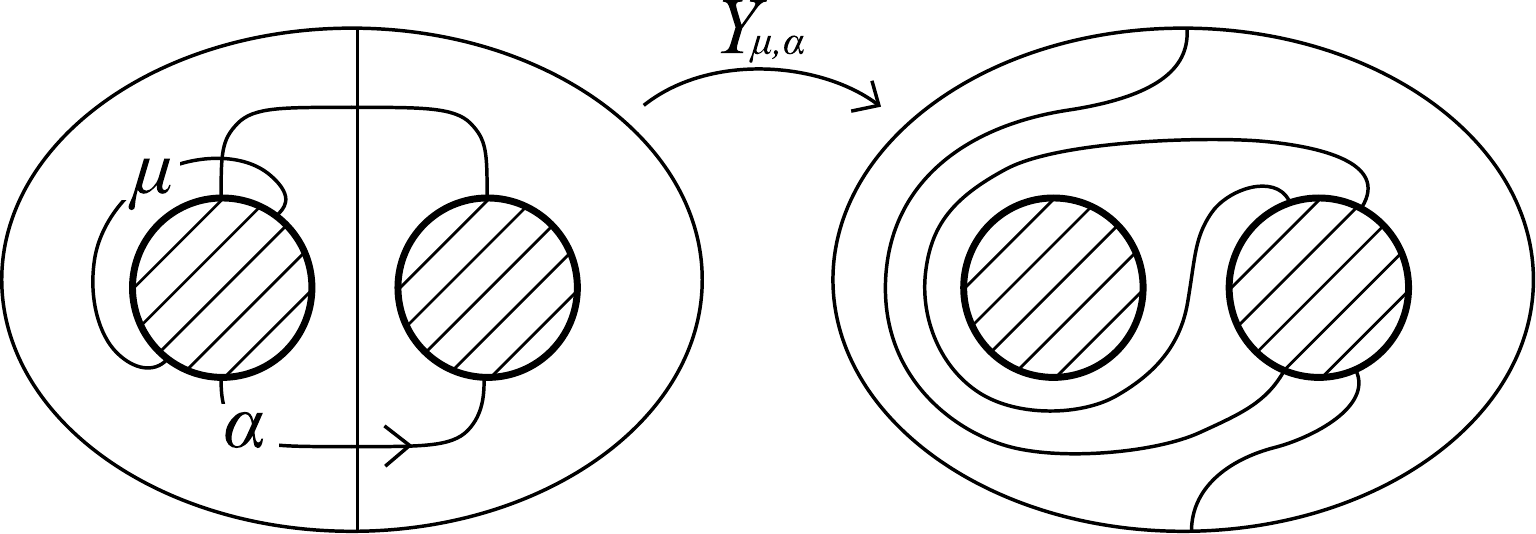}
\caption{Crosscap slide.}\label{r03} %
\end{center}
\end{figure}
Using this notation, for any 
$h\in {\cal{M}}(N)$ we have
\[hY_{\mu,\alpha}h^{-1}=Y_{h(\mu),h(\alpha)}.\]
Using this formula, it is straightforward to check that 
\[\begin{aligned}
   &a_2^{-1}ya_2=a_2^{-1}(Y_{\mu_1,\alpha_1})a_2=Y_{a_2^{-1}(\mu_1),a_2^{-1}(\alpha_1)}=Y_{\mu_1,\alpha_{1,3}},\\
   &a_3^{-1}a_2^{-1}ya_2a_3=a_3^{-1}(Y_{\mu_1,\alpha_{1,3}})a_3=Y_{\mu_1,\alpha_{1,4}},\\
   &a_4^{-1}a_3^{-1}a_2^{-1}ya_2a_3a_4=a_4^{-1}(Y_{\mu_1,\alpha_{1,4}})a_4=Y_{\mu_1,\alpha_{1,5}},\\
   &a_1a_2y^{-1}a_2^{-1}a_1^{-1}=a_1a_2(Y_{\mu_1,-\alpha_1})a_2^{-1}a_1^{-1}=Y_{\mu_2,\alpha_2},\\
   &a_2a_3a_1a_2ya_2^{-1}a_1^{-1}a_3^{-1}a_2^{-1}=a_2a_3(Y_{\mu_2,-\alpha_2})a_3^{-1}a_2^{-1}=Y_{\mu_3,\alpha_3},\\
   &a_4a_5a_3a_4a_2a_3a_1a_2ya_2^{-1}a_1^{-1}a_3^{-1}a_2^{-1}a_4^{-1}a_3^{-1}a_5^{-1}a_4^{-1}=Y_{\mu_5,\alpha_5}.   
  \end{aligned}
\]
for circles $\mu_i,\alpha_i,\alpha_{1,i}$ indicated in Figures \ref{r04} and \ref{r07}. 
\begin{figure}[h]
\begin{center}
\includegraphics[width=1\textwidth]{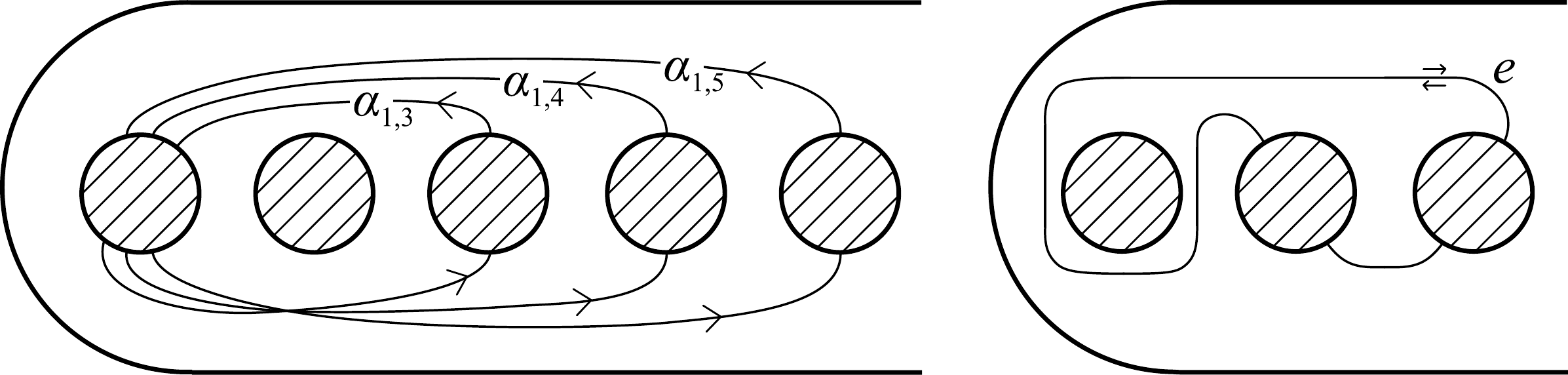}
\caption{Circles $\alpha_{1,i}$ and $e=ya_2^{-1}y^{-1}$.}\label{r07} %
\end{center}
\end{figure}

In particular relations (B1') and (C7') are 
the commutativity relations between respectively $y,Y_{\mu_3,\alpha_3}$ and $b,Y_{\mu_5,\alpha_5}$.
Relation (C1') is the obvious commutativity between $y$ and $a_i$ for $i\geq 3$, and relation (C2') is
the commutativity between $a_2$ and the twist $e=ya_2^{-1}y^{-1}$ indicated in Figure \ref{r07}. 

The meaning of (C4') is also obvious: the crosscap slide $y$ maps $\alpha_1$ to the same circle, but changes 
the local orientation of its regular neighbourhood.

Before we explain the geometric meaning of (B2'), we rewrite this relation using (A1)--(A2), (C2') and (C4'):
\[\begin{aligned}
&\SL{y}(a_2a_1y^{-1}a_2^{-1}ya_1a_2)y=a_1(a_2\SR{a_1}y^{-1}a_2^{-1}y\SL{a_1}a_2)\SR{a_1},\\
&(a_2a_1y^{-1}a_2^{-1}ya_1a_2)y\SL{a_1^{-1}}=y^{-1}a_1(a_2y^{-1}\SR{a_1^{-1}}a_2^{-1}a_1^{-1}ya_2),\\
&(a_1a_2\SR{a_1}y^{-1}a_2^{-1}y\SL{a_1}a_2)y=y^{-1}(a_1a_2y^{-1}a_2^{-1}a_1^{-1})(a_2^{-1}ya_2),\\
   &(a_1a_2y^{-1}a_2^{-1}a_1^{-1})(a_2^{-1}ya_2)y=y^{-1}(a_1a_2y^{-1}a_2^{-1}a_1^{-1})(a_2^{-1}ya_2),\\
   &Y_{\mu_2,\alpha_2}Y_{\mu_1,\alpha_{1,3}}y=y^{-1}Y_{\mu_2,\alpha_2}Y_{\mu_1,\alpha_{1,3}}.
  \end{aligned}
\]
Geometrically, $Y_{\mu_2,\alpha_2}Y_{\mu_1,\alpha_{1,3}}$ has the effect of pushing two first crosscaps 
through the third one. It is clear that this action maps $\alpha_1$ to the same circle but with 
the reversed orientation.

As for (C8') it can be rewritten as
\[\begin{aligned}
   &Y_{\mu_1,\alpha_{1,5}}Y^{-1}_{\mu_1,\alpha_{1,4}}Y_{\mu_1,\alpha_{1,3}}Y^{-1}_{\mu_1,\alpha_{1}}=
   b_{1}b_2^{-1},
  \end{aligned}
\]
where $b_1$ and $b_2$ are twists about the circles $\beta_1,\beta_2$ indicated in Figure \ref{r05}. 
\begin{figure}[h]
\begin{center}
\includegraphics[width=0.73\textwidth]{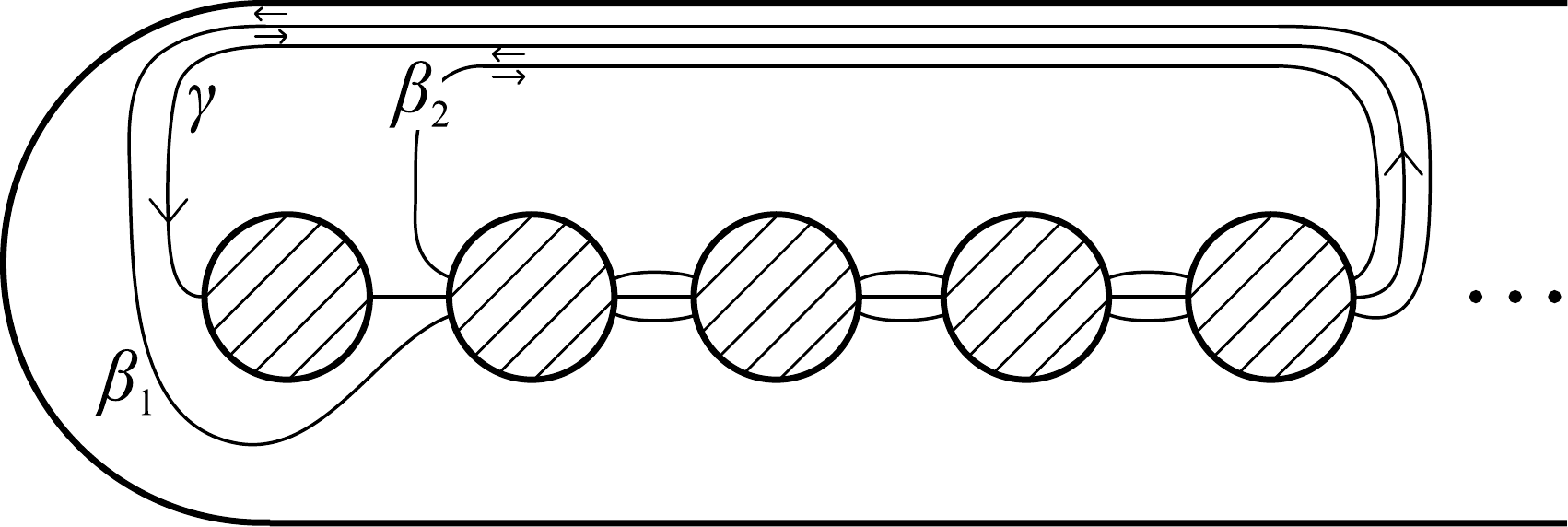}
\caption{Circles $\beta_1,\beta_2$ and $\gamma$.}\label{r05} %
\end{center}
\end{figure}
The composition 
on the left-hand side is equal to the crosscap slide $Y_{\mu_1,\gamma}$, hence (C8') is simply the well-known 
relation between a point/crosscap push along $\gamma$ and the boundary twists of a regular neighbourhood of 
$\gamma$ (see Lemma 2.2 of \cite{SzepParis}).

Finally, we turn to relation (C6'), which is really a relation between four Dehn twists
\[bb'=aa',\]
where $b,b',a,a'$ are twists about the circles $\beta,\beta',\alpha,\alpha'$ indicated in Figure  \ref{r06}. 
\begin{figure}[h]
\begin{center}
\includegraphics[width=1\textwidth]{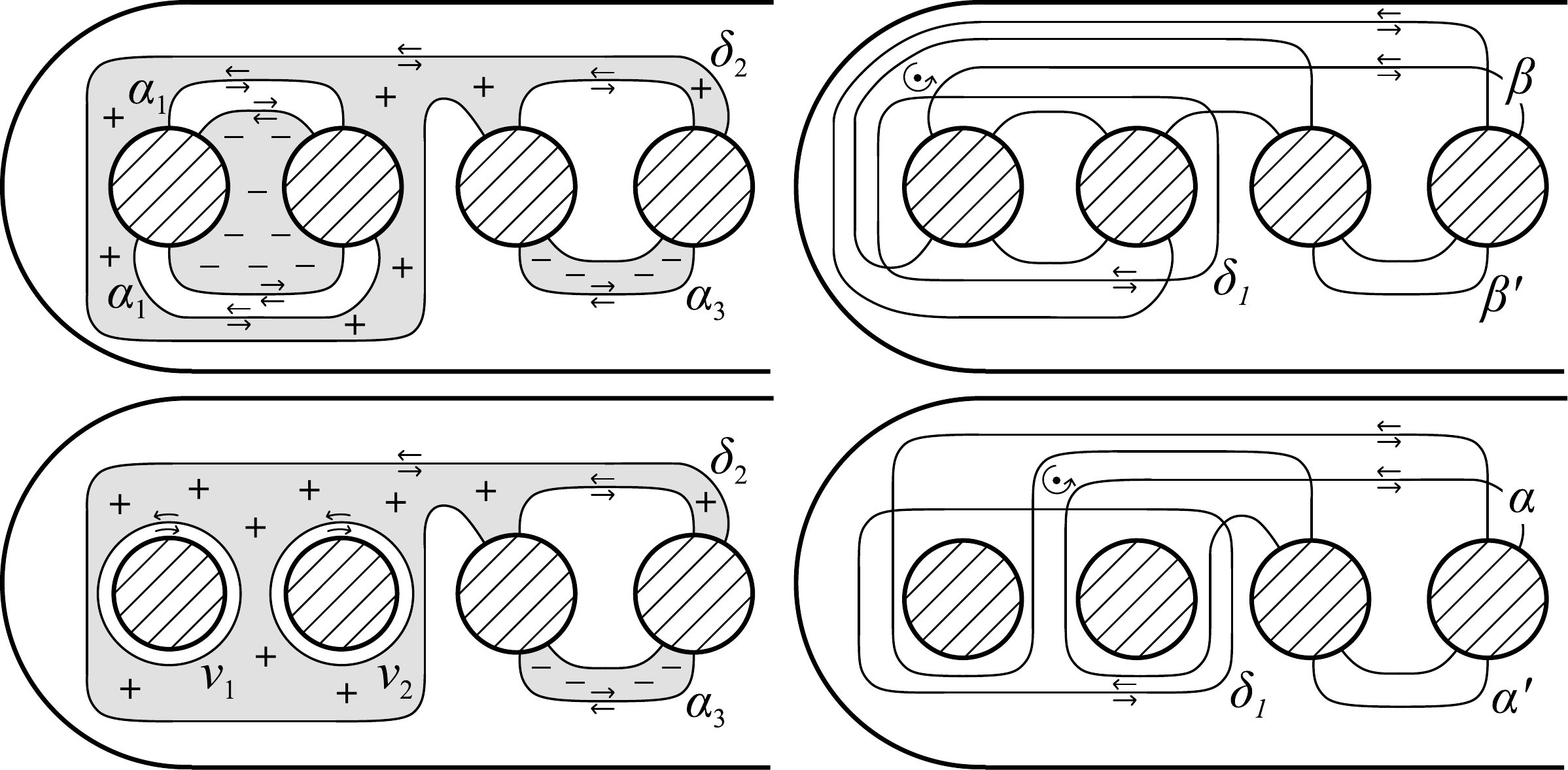}
\caption{Lantern relations $d_2a_3a_1a_1^{-1}=d_1bb'$ and $d_2a_3v_2v_1=d_1aa'$.}\label{r06} %
\end{center}
\end{figure}

It turns out that this relation
is a consequence of two lantern relations indicated in Figure~\ref{r06}. On the left side of this 
figure we indicated two sets of four circles, such that circles in each of these sets bound 
a sphere with four holes (shaded regions in this figure). Moreover, the plus and minus signs in these figures indicate 
the positive orientations of these spheres, that is the orientations with respect to which we consider twists to be 'the right-handed Dehn twists'. Hence, there are two lantern relations 
\[\begin{aligned}
   &d_2a_3a_1a_1^{-1}=d_1bb',\\
   &d_2a_3v_2v_1=d_1aa',
  \end{aligned}
\]
where $d_1,d_2,v_1,v_2$ are twists about circles $\delta_1,\delta_2,\nu_1,\nu_2$ respectively. Note that the order of twists on the right-hand side of the above relations is 
determined by the anticlockwise direction, which we indicated by the small arrows on the right side of Figure \ref{r06}. Since 
$v_1$ and $v_2$ are trivial, these two lanterns imply (C6').

\bibliographystyle{abbrv}
\bibliography{SimpSzepParis.bbl}
\end{document}